\newtheorem{proposition}{Proposition}
\newtheorem{theorem}[proposition]{Theorem}
\newtheorem{lemma}[proposition]{Lemma}
\newtheorem{corollary}[proposition]{Corollary}
\newtheorem{remark}[proposition]{Remark}
\newtheorem{algorithm}{Algorithm}
\newtheorem*{algorithm*}{Algorithm}
\newtheorem{assumption}{Assumption}
\newcommand{\etab}{{\boldsymbol{\eta}}}
\newcommand{\eb}{{\bf e}}
\newcommand{\gb}{{\bf g}}
\newcommand{\hb}{{\bf h}}
\newcommand{\fb}{{\bf f}}
\newcommand{\xb}{{\bf x}}
\newcommand{\yb}{{\bf y}}
\newcommand{\zb}{{\bf z}}
\newcommand{\rb}{{\bf r}}
\newcommand{\tb}{{\bf t}}
\newcommand{\vb}{{\bf v}}
\newcommand{\ub}{{\bf u}}
\newcommand{\pb}{{\bf p}}
\newcommand{\R}{\mathbb{R}}
\newcommand{\N}{\mathbb{N}}
\newcommand{\diff}{{\rm d}}
\newcommand{\rank}{{\rm rank}}
\newcommand{\BE}{\begin{equation}}
\newcommand{\EE}{\end{equation}}
\newcommand{\oneb}{\boldsymbol{1}}
\newcommand{\zerob}{\boldsymbol{0}}
\renewcommand{\S}{\mathbf{S}}
\begin{document}
\title{Regularization preconditioners for frame-based \\ image deblurring with reduced boundary artifacts}
\author{
Yuantao Cai\thanks{School of Mathematical Sciences, University of Electronic Science and Technology of China, Chengdu, Sichuan, 611731, PR China. 
E-mail: yuantaocai@163.com}, 
        Marco Donatelli\thanks{Dipartimento di Scienza e Alta Tecnologia, Universit\`a dell'Insubria, 22100 Como, Italy,
        E-mail: marco.donatelli@uninsubria.it},
Davide Bianchi\thanks{Dipartimento di Scienza e Alta Tecnologia, Universit\`a dell'Insubria, 22100 Como, Italy,
        E-mail: d.bianchi9@uninsubria.it},
Ting-Zhu Huang\thanks{School of Mathematical Sciences, University of Electronic Science and Technology of China, Chengdu, Sichuan, 611731, PR China. 
E-mail: tingzhuhuang@126.com}}
\maketitle

\abstract{
Thresholding iterative methods are recently successfully applied to image deblurring problems.
In this paper, we investigate the modified linearized Bregman algorithm (MLBA) used in  image deblurring problems, with a proper treatment of the boundary artifacts. We consider two standard approaches: the imposition of boundary conditions and the use of the rectangular blurring matrix.

The fast convergence of the MLBA depends on a regularizing preconditioner that could be computationally expensive and hence it is usually chosen as a block circulant circulant block (BCCB) matrix, diagonalized by discrete Fourier transform.
We show that the standard approach based on the BCCB preconditioner may provide low quality restored images and we propose different preconditioning strategies, that improve the quality of the restoration and save some computational cost at the same time.
Motivated by a recent nonstationary preconditioned iteration, we propose a new algorithm that combines such method with the MLBA. We prove that it is a regularizing and convergent method. A variant with a stationary preconditioner is also considered. 
Finally, a large number of numerical experiments shows that our methods provide accurate and fast restorations, when compared with the state of the art.
}

\section{Introduction}
Image deblurring is the process of reconstructing an approximation of an image from blurred
and noisy measurements.
By assuming that the point spread function (PSF) is spatially-invariant,
the observed image $g(x,y)$ is related to the true image $f(x,y)$
via the integral equation
\begin{equation} \label{eq:model2}
g(x,y) = \int\limits_{-\infty}^{+\infty} \int\limits_{-\infty}^{+\infty} h(x-
x',y-y') f(x',y')~dx'~dy' + \eta(x,y),  \qquad(x,y) \in \Omega \subset\R^2,
\end{equation}
where  $\eta(x,y)$ is the noise.

By collocation of the previous integral equation on a uniform grid, we obtain the grayscale images of the observed image, of the true image, and of the PSF, denoted by $G$, $F$, and $H$, respectively.
Since collected images are available only in a finite region, the field of view (FOV),
the measured intensities near the boundary are affected by
data outside the FOV.
Given an $n \times n$ observed image $G$ (for the sake of simplicity we assume square images), and a $p \times p$ PSF with $p\leq n$, then $F$ is $m \times m$ with $m=n+p-1$.
Denoting by $\gb$ and $\fb$ the stack ordered vectors corresponding to $G$ and $F$, the discretization of \eqref{eq:model2} leads to the under-determined linear system
\begin{equation} \label{eq:modeld}
\gb = A \fb + \etab,
\end{equation}
where the matrix $A$ is of size $n^2 \times m^2$. When Imposing proper Boundary Conditions (BCs), the image $A$ becomes square $n^2 \times n^2$ and in some cases, depending on the BCs and the symmetry of the PSF, it can be diagonalized by discrete trigonometric transforms. For example, the matrix $A$ is block circulant circulant block (BCCB) and it is diagonalizzable by Discrete Fourier Transform (DFT), when periodic BCs are imposed. 

Due to the ill-posedness of \eqref{eq:model2}, $A$ is severely ill-conditioned and may be singular. In such case, linear systems of equations \eqref{eq:modeld} are commonly referred to as linear discrete ill-posed problems \cite{HH}.
Therefore a good approximation of $\fb$ cannot be obtained from the algebraic solution (e.g., the least-square solution) of
\eqref{eq:modeld}, but regularization methods are required. The basic idea of regularization is to replace the original ill-conditioned
problem with a nearby well-conditioned problem, whose solution approximates
the true solution. One of the popular regularization techniques is the Tikhonov regularization and it amounts in solving
\begin{equation}\label{eq:tik}
\min_{\fb}\{\|A\fb - \gb\|_2^2 + \mu \|\fb\|_2^2\},
\end{equation}
where $\|\cdot \|_p$ denotes the vector $p$-norm, $p\geq1$, and $\mu>0$ is a regularization parameter to be chosen.
Hereafter, we use $\|\cdot\| \equiv \|\cdot\|_2$ to denote the $\ell_2$-norm.
The first term in \eqref{eq:tik} is usually refereed as fidelity term and the second as regularization term.
This approach is computationally attractive, since it leads to a linear problem and indeed several efficient methods have been developed for computing its solution and for estimating $\mu$ \cite{HH}.
On the other hand, the edges of restored image are usually over-smoothed.
To overcome this unpleasant property, nonlinear strategies have been employed, like total variation (TV) \cite{ROF} and
thresholding iterative methods \cite{D3,FN03}. Anyway, several nonlinear regularization methods have an inner step that apply a least-square regularization and hence can benefit from strategies previously developed for such simpler model, as we will show in the following.

In this paper we consider a regularization strategy based on wavelet decomposition that has been recently largely investigated \cite{COS,COS2,CRSS, FN03, DHRZ, D3}.
This approach is motivated by the fact that most real images usually have sparse approximations under
some wavelet basis. In particular, in this paper we consider the tight frame systems previously used in \cite{CCSS03,COS,COS2}.
Solving \eqref{eq:modeld} in a tight frame domain, the
redundancy of system leads to robust signal representation in which partial loss of
the data can be tolerated without adverse effects. In order to obtain the sparse approximation, we minimize the weighted $\ell_1$-norm of the tight frame coefficients.
Let $W^T$ be a wavelet or tight-frame synthesis operator ($W^TW=I$), the
wavelets or tight-frame coefficients of the original image $\fb$ are $\xb$ such that
\begin{equation}\label{eq:wavecoeff}
\fb= W^T\xb.
\end{equation}
In the following, we will investigate the synthesis approach, but our proposal can be applied also to the analysis and to the balanced approach described in \cite{COS2,STY}.
Reformulating the deblurring problem \eqref{eq:modeld} in terms of frame coefficients
\begin{equation} \label{eq:l1}
\min_{\xb} \{\mu\|\xb\|_1 + \|\xb\|^2: AW^T\xb =\gb\},
\end{equation}
a regularized solution of this problem can be obtained by the Bregman splitting
\cite{YOGD}. As for the iterative soft-thresholding \cite{D3,FN03} for the unconstrained version of \eqref{eq:l1} and the Landweber method for the least-square solution of \eqref{eq:modeld},  the Bregman splitting converges very slowly
for image deblurring problems. Hence a preconditioning strategy is usually employed, obtaining the Modified Linearized Bregman Algorithm (MLBA) \cite{COS}. The preconditioner is usually chosen as a BCCB approximation of $(AA^T+\alpha I)^{-1}$, $\alpha>0$, see \cite{COS,COS2,STY}, which is the simplest regularized version of the inverse of $AA^T$ which, also when theoretically available, cannot be computed due to the
severe ill-conditioning of $A$.

In this paper, we show that the BCCB preconditioner used in the literature leads often to poor restorations when the matrix $A$ has the rectangular
$n^2 \times m^2$ structure or is obtained by imposing accurate BCs, like antireflective BCs \cite{Serra}. Note that in real applications we have to take into account the boundary effects to obtain high quality restorations, otherwise the restored image is severely affected by ringing effects \cite{HNO05,NCT99}. This topic has been recently investigated in connection with nonlinear models based on wavelets or TV  in \cite{S12,AF13,BCDS14}, but, at our knowledge, this is the first time that it is considered in connection with the MLBA.
In this context we propose and discuss other preconditioning strategies for the MLBA with the synthesis approach. Our preconditioners are inspired by the experience with least-square regularization where the regularization preconditioning is studied since a long time, see the seminal paper \cite{HNP}. 
In particular a nonstationary preconditioned iteration suggested by \cite{DH13} leads to a new algorithm that is no longer a Bregman iteration.

We investigate the following two strategies to define accurate and computationally cheap preconditioners:
\begin{enumerate}
	\item[(1)] an approximation of the blurring operator in a small Krylov subspace;
	\item[(2)] a symmetrization of the original PSF $H$;
\end{enumerate}
The choice (1) is quite natural and already considered for similar problems (see e.g. \cite{AF13}), but we will show that a properly chosen Krylov subspace of small size (say spanned by at most five vectors), with a proper choice of the initial guess, is usually enough to obtain a good approximation. The choice (2) can be very useful in many applications where the PSF is obtained experimentally by measurements and is a perturbation of a symmetric kernel. In this case the approximated quadrantally symmetric (i.e., symmetric with respect to each quadrant) PSF leads to a matrix diagonalizable by Discrete Cosine Transform (DCT). 

Following the idea to combine preconditioned regularizing iterative methods for least-square ill-posed problems with the MLBA, we propose a new algorithm based on the recent proposal in \cite{Acqua,DH13}. The nonstationary preconditioner is defined by a parameter computed by solving a nonlinear problem with a computational cost of $O(n^2)$.
We observe that this method can be applied only with square matrices and so only when $A$ is obtained by imposing BCs.
The new algorithm is no longer a Bregman iteration and we cannot apply the convergence analysis developed in \cite{COS}
for the MLBA. Therefore, here we prove its convergence and its regularization character.
Furthermore, when a good value for the parameter in the preconditioner is available, we provide a variant of the algorithm with a stationary preconditioner which can improve the quality of the restorations even if the previous convergence analysis does not hold any longer.

A large number of numerical experiments in Section~\ref{sec:numres} shows that our proposals 
not only outperform the standard MLBA with BCCB preconditioning, but are also good competitors for other recent methods dealing with boundary artifacts proposed in \cite{AF13,BCDS14}.

Besides, we mention that the two deblurring models based on the rectangular matrix $A$ and the imposition of BCs, we have tested also a third strategy based on the enlargement of the domain to reduce the ringing effects like in \cite{R05,S12}, but, according to the results in \cite{AF13}, the quality of the restored images were not better than those obtained with the other two models, while the CPU time was higher. Hence, we do not discuss further this strategy \nolinebreak here.

The paper is organized as follows. In Section~\ref{sect:structA}, we describe  the structure of the blurring matrix $A$ explaining how fast trigonometric transforms can be used in the computations both for the rectangular matrix and the square matrices arising from the imposition of classical BCs.
Section~\ref{sect:mlb} reviews briefly the synthesis approach and the MLBA for solving the corresponding minimization problem. In Section~\ref{sect:prec} we propose possible regularization preconditioners, combining accurate restorations and a low computational cost.   
A new algorithm is proposed in Section \ref{sect:alg4} combining the MLBA with the method in \cite{DH13}.
Section~\ref{sec:numres} contains a large number of numerical experiments, comparing our proposal with some state of the art algorithms, for the restoration of images with unknown boundaries.
Concluding remarks are provided in Section~\ref{sect:concl}.

\section{The structure of the blurring matrix}\label{sect:structA}
We count mainly three strategies in  order to obtain both accurate and fast restorations
with reduced boundary artifacts. In this paper, we just consider two of them: the use of the original rectangular matrix and the imposition of BCs.
As mentioned in the Introduction, we do not consider the third strategy introduced in \cite{R05}, since from one side it is equivalent to the reflective (or Neumann) BCs in the case of quadrantally PSF, cf. \cite{DS10}, and from the other side, according to several tests that we have performed and the numerical results in \cite{AF13}, it does not provide a better restoration than the other two strategies, while  it usually requires a larger CPU time.

In this section we describe the structure of the matrix $A$ and how fast computations with such matrix, like matrix-vector product or least-square solutions, can be implemented. Firstly we introduce the rectangular $n^2 \times m^2$ matrix and then the square $n^2 \times n^2$ matrix obtained when imposing proper BCs.

\subsection{The rectangular matrix}\label{ssect:rect}

The fact that this matrix is not square prevents the use of Fast Fourier Transform (FFT).
To cope with this difficulty, one can construct an $m^2\times
m^2$ blurring matrix $A_{big}$ that is BCCB, and hence, the FFT can be used.
Let $M\in\R^{n^2\times m^2}$ be a masking matrix which, when applied to a vector
in $\R^{m^2}$, selects only the entries in the FOV, i.e., their rows are a subset of
the rows of an identity matrix of order $m^2$.
The rectangular blurring matrix can be written as
\begin{equation}\label{eq:Arect}
 A = MA_{\rm big}.
 \end{equation}
Hence the matrix vector $A\xb$ can be easily computed by two bi-dimensional FFTs of
order $m^2$, followed by a selection of the pixels inside the FOV.
Similarly  $A^T\xb = A^T_{\rm big}M^T\xb$ and thus two FFTs are applied to the zero-padded version of
$\xb$ of size $m^2$.
This approach was used in \cite{AF13} and it is numerically equivalent to that adopted in \cite{VBDW05}.

We observe that the matrix $M$ in \eqref{eq:Arect} leads to a matrix $A$ independent of the BCs 
used in the definition of $A_{\rm big}$. Thus, when the PSF is quadrantally symmetric, we suggest 
to use of the DCT instead of the DFT
(see the discussion on reflective BCs in the next subsection).

The structure of the matrix $A$ and its representation in  \eqref{eq:Arect} allow fast computations of
the matrix vector product with $A$ and $A^T$, but they prevent the use of fast transforms for solving
linear systems with polynomials of $AA^T$ as coefficient matrix even when $A$ is full-rank.

\subsection{Boundary conditions}\label{ssect:BC}
The BC approach forces a functional dependency between the
elements of $\fb$ external to the FOV and those internal to this
area. This has the effect of extending $F$ outside of the FOV
without adding any unknowns to the associated image deblurring
problem. Therefore, the matrix $A$ can be written as an $n^2 \times n^2$ square matrix,
whose structure can be exploited by fast algorithms.
If the BC model is not a good approximation of the real world outside the FOV,
the reconstructed image can be severely affected by some unwanted artifacts near the
boundary, called ringing effects \cite{HNO05}.

The use of different BCs can be motivated from information on the true image
and/or from the availability of fast transforms to diagonalize the matrix $A$ within $O(n^2\log(n))$ arithmetic operations.
Indeed, the matrix-vector product can be always computed by the 2D FFT, after a proper padding of the image to convolve (the resulting image is the inner $n \times n$ part of the convolution), cf. \cite{RestoreTools}, while the availability of fast transforms to diagonalize the matrix $A$ depends on the BCs. Anyway, the shift-invariant property of the blur leads to a matrix $A$ that can be well approximated by a BCCB matrix $C$, which is diagonalized by DFT, because usually in the applications 
\begin{equation}\label{eq:apprxAP}
A-C=R+N,
\end{equation}
where $R$ is a matrix of small rank and $N$ is a matrix of small norm. 
More precisely, for any $\varepsilon>0$ there is a constant $c_\epsilon >0$ independent of $n$ and depending only on $\epsilon$ and on the PSF, such that the splitting \eqref{eq:apprxAP} holds with 
\begin{equation}\label{eq:ranknorm}
\rank(R)\leq c_\epsilon \cdot n, \qquad \|N\|\leq\varepsilon,
\end{equation}
where $\rank(R)$ denotes the rank of $R$ (see \cite{HN96}).
Note that $n^2$ is the size of the matrix $A$.

In the following we recall common BCs that will be used in the numerical tests.
For a detailed description of zero, periodic, and reflective, refer to \cite{HNO05},
while for antireflective BC see the review paper \cite{DS10rev} and the original proposal in \cite{Serra}.

\begin{table}
\begin{center}\begin{small}
\begin{tabular} {c@{\hspace{1.5cm}}c@{\hspace{1.5cm}}c} 
Zero & Periodic & Reflective \\
\hline\\
$\begin{array}{ccc}
\zerob & \zerob & \zerob \\
\zerob & F      & \zerob \\
\zerob & \zerob & \zerob
\end{array} $
& 
$\begin{array}{ccc}
F & F & F \\
F & F & F \\
F & F & F
\end{array} $ 
& 
$\begin{array}{ccc}
F_{rc} & F_r & F_{rc} \\
F_c    & F   & F_c \\
F_{rc} & F_r & F_{rc}
\end{array} $
\end{tabular}
\end{small}
\caption{Pad of the original image $F$ obtained by imposing the classical BCs considered in \cite{HNO05}, with
$F_c = {\rm fliplr}(F)$,
$F_r = {\rm flipud}(F)$, and
$F_{rc}= {\rm flipud}({\rm fliplr}(F))$,
where ${\rm fliplr}(\cdot)$ and ${\rm flipud}(\cdot)$ are the MATLAB functions that perform the left-right and up-down flip, respectively.
}
\label{tab:BC}
\end{center} 
\end{table}

\begin{description}
\item[Zero] (i.e., Dirichlet) BCs
assume that the object is zero outside of the FOV.
That is, one assumes that $F$ has been extracted from a larger array 
padded by zeros (see Table\ref{tab:BC}).
This is a good choice when the true image is mostly zero
outside the FOV, as is the case for many astronomical or medical images with a black background.
Unfortunately, these BCs have a bad effect on reconstructions of images
that are nonzero outside the border, leading to reconstructed image with severe ``ringing'' near the boundary.
The corresponding matrix $A$ has a block-Toeplitz-Toeplitz-block (BTTB) structure which is not diagonalizable by fast trigonometric transforms.
\item[Periodic] BCs assume
that the observed image repeats in all directions.
More specifically, one assumes that $F$ has been extracted from a larger array of the form in Table~\ref{tab:BC}.
The corresponding matrix $A$ is BCCB and so is always diagonalized by 2D DFT. Clearly, if the true image is not periodic outside the FOV the reconstructed image will be affected by severe ringing effects.
\item[Reflective] (i.e., Neumann or symmetric) BCs
assume that outside the FOV the image is a mirror image of $F$ \cite{NCT99}.
That is, one assumes that $F$ has been extracted from a larger
array symmetrically padded like in Table~\ref{tab:BC}.
The matrix $A$ has a block structure that combines Toeplitz and Hankel structures,
but it can be easily diagonalized by the DCT,
when the PSF is quadrantally symmetric \cite{NCT99}.
\item[AntiReflective] BCs
have a more elaborate definition, but have a simple motivation:
the antisymmetric pad yields an extension that preserves the continuity of the normal derivative \cite{Serra}.
They are given by
\begin{align*}
F(1-i,j)  = 2 F(1,j) & - F(i+1,j), & 1 \leq i \leq p, \;1 \leq j \leq n; \\
F(i,1-j)  = 2 F(i,1) & - F(i,j+1) , & 1 \leq i \leq n, \;1 \leq j \leq p;  \\
F(n+i,j)  = 2 F(n,j) & - F(n-i,j), & 1 \leq i \leq p, \;1 \leq j \leq n; \\
F(i,n+j)  = 2 F(i,n) & - F(i,n-j), & 1 \leq i \leq n, \;1 \leq j \leq p;
\end{align*}
for the edges, while for the corners the more computationally
attractive choice is to antireflect first in one direction and
then in the other \cite{spie2003}. This yields
\begin{align*}
F(1-i,1-j) = & \; 4 F(1,1) - 2F(1,j+1) - 2F(i+1,1) + F(i+1,j+1), \nonumber \\
F(1-i,n+j) = & \; 4 F(1,n) - 2F(1,n-j) -2F(i+1,n) + F(i+1,n-j), \nonumber \\
F(n+i,1-j) = & \; 4 F(n,1) - 2F(n,j+1) -2F(n-i,1) + F(n-i,j+1), \nonumber \\
F(n+i,n+j) = & \; 4 F(n,n) - 2F(n,n-j) -2F(n-i,n) + F(n-i,n-j), \nonumber
\end{align*}
for $1 \leq i, j \leq p$.  

The structure of the matrix $A$ is quite involved, but it can be diagonalized by the antireflective transform, 
when the PSF is quadrantally symmetric \cite{ADS08}. Since $A$ is not normal the antireflective transform is not unitary, but it can be represented as a modification of the discrete sine transform, formed by adding a uniform sampling of constant and linear functions to the eigenvector basis preserving an ``almost'' unitary behaviour~\cite{DS10rev}.\footnote{MATLAB functions for working with antireflective BC (antireflective transform, antisymmetric pad, ecc.) can be download at {\sf http://scienze-como.uninsubria.it/mdonatelli/Software/software.html}}

Due to the structure of $A$, the application of $A^T$ could generate artifacts at the boundary \cite{DS}; consequently it was proposed to replace $A^T$ by a reblurring matrix $A'$ obtained by imposing the same BCs to the PSF rotated by $180^\circ$ \cite{DEMS06}.
Note that in the case of zero and periodic BCs $A'=A^T$.
Furthermore, the MATLAB Toolbox RestoreTools \cite{NPP} (that we will use in the numerical results) implements the reblurring approach to overload the matrix-vector product with $A^T$, see \cite{DEMS06}. Therefore, for the sake of notational simplicity and uniformity, in the following the symbol $A^T$ has to be intended as the reblurring matrix $A'$ in the case of antireflective BCs.
\end{description}

The reflective BCs will not be considered in the numerical results in Section \ref{sec:numres} since they have the same computational properties of the antireflective BCs, e.g., for quadrantally PSF simply replace the antireflective transform with the DCT, and usually provide restorations slightly worser or at least comparable with the antireflective BCs, as we have numerically observed according to similar results with other regularization strategies \cite{DEMS06,P06,CH08}.
On the other hand, we have recalled also the reflective BCs to motivate a preconditioner based on the DCT, instead of the DFT, when the PSF is quadrantally symmetric.

\section{MLBA for the synthesis approach}\label{sect:mlb}
For the synthesis approach \cite{COS, FN03} the coefficient matrix is
\begin{equation}\label{eq:K}
K =A W^T \in \R^{n^2\times s},
\end{equation}
where $n^2\leq s$, $A$ is the blurring matrix and $W^T$ is a tight-frame or wavelet synthesis operator.
Note that using tight-frames $W^TW=I$ but $WW^T\neq I$ \cite{DHRZ}.
The use of tight-frames instead of wavelets is motivated by the fact that
the redundancy of tight-frame systems leads to robust signal representations in which partial loss of the data can be tolerated, without adverse effects, see e.g. \cite{CRSS}.

Denote by $\xb$ the frame coefficients of the image $\fb$ according to \eqref{eq:wavecoeff}.
Let the nonlinear operator $\S_\mu$ be defined component-wise as
\begin{equation}\label{eq:soft}
[\S_\mu(\xb)]_i=S_\mu(x_i),
\end{equation}
with $S_\mu$ the soft-thresholding function
\[
S_\mu(x_i)={\rm sgn}(x_i)\max\left\{|x_i|-\mu,\,0\right\}.
\]

Note that for image deblurring problems the singular values of $A$, and so those of $K$, decay exponentially to zero and we cannot assume that $K$ is surjective.
Therefore, the deblurring problem can be reformulated in terms of the frame coefficients $\xb$ as
\begin{equation} \label{eq:l1bis}
\min_{\xb\in\R^s} \left\{\mu\|\xb\|_1 + \frac{1}{2\lambda}\|\xb\|^2 : {\rm arg} \min_{\xb\in\R^s} \|K\xb -\gb\|^2\right\}.
\end{equation}
which is equivalent to \eqref{eq:l1} if $K$ is surjective.

The following linearized Bregman iteration
\begin{equation}\label{eq:iterLinBreg}
\left\{ {{\begin{array}{ll}
 {\zb^{n+1} =\zb^{n}  + K^T\left(\gb-K\xb^{n}\right),}\hfill \\
  {\xb^{n+1} = \lambda\S_\mu(\zb^{n+1}),} \hfill \\
\end{array} }} \right.
\end{equation}
where $\zb^0 = \xb^0 = \bf{0}$, was introduced in  \cite{YOGD} to solve problem \eqref{eq:l1bis} and later applied to image deblurring in \cite{COS}. A detailed convergence analysis of the linearized Bregman iteration \eqref{eq:iterLinBreg} was given in \cite{COS3} when $K$ is surjective, but we report here Theorem 3.1 in \cite{COS} that does not require such assumption.
\begin{theorem}[\cite{COS}] \label{lem:LinBregConv}
Let $K\in\R^{n^2\times s}$, $n^2<s$ and let
$0<\lambda< \frac{1}{\|K^TK\|}$. Then the sequence $\{\xb^{n+1}\}$
generated by \eqref{eq:iterLinBreg}
converges to the unique solution of
\eqref{eq:l1bis}.
\end{theorem}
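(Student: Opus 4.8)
The plan is to recast the iteration \eqref{eq:iterLinBreg} as ordinary gradient descent on a smooth convex dual functional, and then to read off convergence from the step-size restriction encoded by $\lambda<1/\|K^TK\|$. The starting point is that soft-thresholding is itself a gradient: setting
\[
\Phi(\zb)=\frac{\lambda}{2}\sum_{i=1}^s\bigl(\max\{|z_i|-\mu,0\}\bigr)^2,
\]
one checks that $\Phi$ is convex and $C^1$ with $\nabla\Phi(\zb)=\lambda\S_\mu(\zb)$, and that $\nabla\Phi$ is Lipschitz with constant $\lambda$ (because $\S_\mu$ is $1$-Lipschitz, its diagonal Jacobian having entries in $\{0,1\}$). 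A componentwise Fenchel-conjugate computation then gives $\Phi^*(\xb)=\mu\|\xb\|_1+\frac{1}{2\lambda}\|\xb\|^2$, i.e. the conjugate of $\Phi$ is exactly the objective in \eqref{eq:l1bis}. Since the quadratic term makes this objective strictly convex and coercive, and since the feasible set $\mathrm{arg}\min_\xb\|K\xb-\gb\|^2=\{\xb:K\xb=\hat\gb\}$, with $\hat\gb:=P_{\Range{K}}\gb$, is a nonempty affine (hence closed convex) set, the constrained problem has a unique minimizer $\xb^\star$. This settles uniqueness.

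Next I would pass to a dual vector in $\R^{n^2}$. Because $\zb^0=\zerob$ and every increment in \eqref{eq:iterLinBreg} lies in $\Range{K^T}$, one may write $\zb^n=K^T\yb^n$ with $\yb^0=\zerob$ and $\yb^{n+1}=\yb^n+\gb-K\nabla\Phi(K^T\yb^n)$. Using $K^T\gb=K^T\hat\gb$ (the residual $\gb-\hat\gb$ lies in $\ker K^T=\Range{K}^\perp$ and is annihilated), the dynamics of $\zb^n$ coincide with unit-step gradient descent on the Fenchel dual functional
\[
E(\yb)=\Phi(K^T\yb)-\langle\yb,\hat\gb\rangle,\qquad \nabla E(\yb)=K\nabla\Phi(K^T\yb)-\hat\gb .
\]
A one-line estimate bounds the Lipschitz constant of $\nabla E$ by $\|\nabla E(\yb_1)-\nabla E(\yb_2)\|\le\lambda\|K\|^2\|\yb_1-\yb_2\|=\lambda\|K^TK\|\,\|\yb_1-\yb_2\|$, so the hypothesis $\lambda<1/\|K^TK\|$ makes $E$ convex with $L$-Lipschitz gradient for some $L<1$. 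Hence the gradient map $T=I-\nabla E$ is averaged and nonexpansive (the step $1$ lying well inside $(0,2/L)$), and by the standard Krasnoselskii--Mann (Opial) argument $\yb^n$ converges to a fixed point $\bar\yb$ of $T$, that is, to a minimizer of $E$, \emph{provided} the set of minimizers is nonempty.

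The remaining point, which I expect to be the main obstacle, is precisely to certify that $E$ attains its infimum, since $E$ is not coercive: it is constant along $\ker K^T$, and on $\Range{K}$ the potential $\Phi(K^T\yb)$ vanishes on a box and grows only quadratically outside it. I would close this gap by Fenchel--Rockafellar duality: the primal problem $\min\{\Phi^*(\xb):K\xb=\hat\gb\}$ is feasible with finite value and has affine (polyhedral) constraints, so strong duality holds with no gap and the dual optimum is attained; that dual is exactly $\min_\yb E(\yb)$. Equivalently, the optimality system $\xb^\star=\nabla\Phi(K^T\yb^\star)=\lambda\S_\mu(K^T\yb^\star)$, $K\xb^\star=\hat\gb$, is solvable, and any such $\yb^\star$ satisfies $\nabla E(\yb^\star)=0$. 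With attainment secured, the convergence $\yb^n\to\bar\yb$ of the previous paragraph applies; continuity of $\S_\mu$ then gives $\xb^n=\lambda\S_\mu(K^T\yb^n)\to\lambda\S_\mu(K^T\bar\yb)$, and since $\nabla E(\bar\yb)=0$ this limit satisfies both $K\xb=\hat\gb$ and the inclusion $K^T\bar\yb\in\partial\Phi^*(\xb)$, i.e. the optimality conditions for \eqref{eq:l1bis}. By the uniqueness established above the limit equals $\xb^\star$, which proves the claim.
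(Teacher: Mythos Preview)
The paper does not prove this statement: it is quoted verbatim as Theorem~3.1 from \cite{COS} and no argument is given in the present paper. So there is no ``paper's own proof'' to compare against.

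Your argument is correct and is essentially the standard dual interpretation of the linearized Bregman iteration. The identification $\nabla\Phi=\lambda\S_\mu$ with $\Phi^*(\xb)=\mu\|\xb\|_1+\tfrac{1}{2\lambda}\|\xb\|^2$ is right, and the rewriting $\zb^n=K^T\yb^n$ together with $K^T\gb=K^T\hat\gb$ cleanly reduces the iteration (as far as $\zb^n$ and $\xb^n$ are concerned) to unit-step gradient descent on $E(\yb)=\Phi(K^T\yb)-\langle\yb,\hat\gb\rangle$. The Lipschitz bound $\lambda\|K^TK\|<1$ on $\nabla E$ makes step size $1$ admissible, and the Krasnosel'ski\u{\i}--Mann/Opial argument yields convergence of the full sequence once $\arg\min E\neq\emptyset$. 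Your use of Fenchel--Rockafellar duality to secure dual attainment is the right move: since $\Phi^*$ has full domain $\R^s$ and the constraint $K\xb=\hat\gb$ is affine and feasible, Slater holds trivially and the dual optimum is attained. The final identification of the limit with the unique primal optimum via the KKT system is fine.

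Two minor remarks, neither of which is a gap. First, when you pass to the auxiliary iteration with $\hat\gb$ in place of $\gb$, it is worth stating explicitly that the resulting $\yb'^n$ stay in $\Range{K}$ (both $\hat\gb$ and $K\nabla\Phi(\cdot)$ lie there), so you are really doing gradient descent on $E$ restricted to $\Range{K}$; this makes the appeal to Opial cleaner since the minimizer set of $E$ is invariant under translations in $\ker K^T$. Second, the proof in \cite{COS} is organized somewhat differently, via Bregman distances and a monotonicity argument rather than the explicit dual-descent viewpoint; your route is arguably more transparent and is close in spirit to the analysis in \cite{YOGD} and later work recasting linearized Bregman as gradient descent on the dual.
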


As observed in \cite{COS3} the convergence speed of \eqref{eq:iterLinBreg} depends of the condition number of $K$ which, as observed before, is very large for image deblurring and hence the method results to be very slow.
To accelerate its convergence in the case of $KK^T\neq I$, in \cite{COS} the authors modified iteration \eqref{eq:iterLinBreg} by replacing $K^T$ with $K^\dag$, where $K^\dag$ denotes the pseudo-inverse of $K$.
If $K$ is surjective $K^\dag=K^T(KK^T)^{-1}$ since $n^2\leq s$. For image deblurring problems they suggested to replace $(KK^T)^\dag$ with a symmetric positive definite matrix $P$ such that
\begin{equation}\label{eq:P}
P \approx (KK^T)^\dag=(AA^T)^{\dag}.
\end{equation}
Then the MLBA for frame-based image deblurring becomes \cite{COS}
\begin{equation} \label{eq:MLBA}
    \left\{
    \begin{array}{l}
          \zb^{n+1}=\zb^{n}+K^TP( \gb- K\xb^{n}), \\
          \xb^{n+1}=\lambda\S_\mu(\zb^{n+1}),
    \end{array}
    \right.
\end{equation}
where $\zb^0 = \xb^0 = \bf{0}$.

\begin{remark}\label{rem:prec}
The MLBA \eqref{eq:MLBA} is the linearized Bregman iteration \eqref{eq:iterLinBreg} for the 
linear system
\[
P^{1/2}K\xb=P^{1/2}\gb,
\]
which is a preconditioned version of original linear system $K\xb=\gb$ by the preconditioner $P^{1/2}$.
In fact, by replacing $K$ and $\gb$ in \eqref{eq:iterLinBreg} by $P^{1/2}K$ and $P^{1/2}\gb$, respectively,
we obtain~\eqref{eq:MLBA}.
\end{remark}

The previous remark is the key observation used in \cite{COS} to prove that the MLBA algorithm converges to a minimizer of
\begin{equation} \label{eq:objLinBregLS}
\min_{\xb\in\R^s}\left\{\mu \|\xb\|_1 + \frac{1}{2\lambda}\|\xb\|^2: \xb =
 {\rm arg} \min_{\xb\in\R^s} \|K\xb -\gb\|_P^2\right\}.
\end{equation}

\begin{theorem}[\cite{COS}] \label{lem:modLinBregConv}
Assume $P$ is a symmetric positive definite matrix and let
$0<\lambda< \frac{1}{\|K^TPK\|}$. Then the sequence $\{\xb^{n+1}\}$
generated by the MLBA~\eqref{eq:MLBA}
converges to the unique solution of
\eqref{eq:objLinBregLS}.
\end{theorem}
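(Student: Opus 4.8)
The plan is to reduce this statement directly to Theorem~\ref{lem:LinBregConv} by making rigorous the observation recorded in Remark~\ref{rem:prec}. Since $P$ is symmetric positive definite, it admits a unique symmetric positive definite square root $P^{1/2}$, which is in particular invertible. I would set $\Kh := P^{1/2}K$ and $\widehat{\gb} := P^{1/2}\gb$, and the first task is to verify that the MLBA~\eqref{eq:MLBA} is literally the linearized Bregman iteration~\eqref{eq:iterLinBreg} run with $K$ and $\gb$ replaced by $\Kh$ and $\widehat{\gb}$. Substituting into the $\zb$-update of~\eqref{eq:iterLinBreg} and using $(P^{1/2})^T = P^{1/2}$ together with $P^{1/2}P^{1/2} = P$ gives $\Kh^T(\widehat{\gb} - \Kh\xb^n) = K^T P(\gb - K\xb^n)$, which is exactly the $\zb$-update of~\eqref{eq:MLBA}; the $\xb$-update and the initialization $\zb^0 = \xb^0 = \mathbf{0}$ are identical in both schemes. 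Hence the two sequences coincide term by term.

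Next I would check that the hypotheses of Theorem~\ref{lem:LinBregConv} are met by $\Kh$. Because $P^{1/2}$ is invertible and $n^2 \times n^2$, the matrix $\Kh$ is again $n^2 \times s$ with the same strict inequality $n^2 < s$, so the dimensional requirement carries over unchanged. For the step-size condition, compute $\Kh^T \Kh = K^T P^{1/2} P^{1/2} K = K^T P K$, so that $\|\Kh^T\Kh\| = \|K^T P K\|$; the assumed bound $0 < \lambda < 1/\|K^T P K\|$ is therefore precisely the bound $0 < \lambda < 1/\|\Kh^T\Kh\|$ required by Theorem~\ref{lem:LinBregConv}. Applying that theorem yields convergence of $\{\xb^{n+1}\}$ to the unique minimizer of problem~\eqref{eq:l1bis} written for $\Kh$ and $\widehat{\gb}$.

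It remains to identify this limiting problem with~\eqref{eq:objLinBregLS}. The objective $\mu\|\xb\|_1 + \frac{1}{2\lambda}\|\xb\|^2$ is the same in both, so I only need to match the constraint sets. Here the key algebraic identity is
\[
\|\Kh\xb - \widehat{\gb}\|^2 = \|P^{1/2}(K\xb - \gb)\|^2 = (K\xb - \gb)^T P (K\xb - \gb) = \|K\xb - \gb\|_P^2,
\]
so the two least-squares functionals are equal as functions of $\xb$, and hence their $\argmin$ sets coincide exactly. This shows the limiting problem is~\eqref{eq:objLinBregLS} and completes the argument.

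I do not anticipate a genuine obstacle here: the proof is essentially a change of variables made rigorous by the invertibility and symmetry of $P^{1/2}$. The only points requiring a modicum of care are the well-definedness of the symmetric square root, guaranteed by positive definiteness of $P$, and confirming that left-multiplication by the invertible $P^{1/2}$ preserves the dimension hypothesis $n^2 < s$ of Theorem~\ref{lem:LinBregConv}, neither of which is difficult.
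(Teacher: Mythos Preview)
Your proposal is correct and follows exactly the approach indicated in the paper: the paper does not give an independent proof of this result but cites \cite{COS}, noting explicitly that ``the previous remark is the key observation used in \cite{COS}'' --- namely Remark~\ref{rem:prec}, the substitution $K\mapsto P^{1/2}K$, $\gb\mapsto P^{1/2}\gb$ that reduces the MLBA to the linearized Bregman iteration and hence to Theorem~\ref{lem:LinBregConv}. Your argument makes this reduction precise, including the identification $\|\Kh\xb-\widehat{\gb}\|^2=\|K\xb-\gb\|_P^2$ needed to match the constraint sets, so nothing is missing.
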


The standard choice for $P$ is
\begin{equation}\label{eq:Pstandard}
P=(KK^T+\alpha I)^{-1}=(AA^T+\alpha I)^{-1}.
\end{equation}
In such case
\[\|K^TPK\|<1\]
and hence $\lambda=1$ is a good choice according to Theorem \ref{lem:modLinBregConv}.
When $AA^T+\alpha I$ is not easily invertible other choices as $P$ can be explored, but usually the quantity $\|K^TPK\|$ becomes hard to estimate. Therefore, assuming that $P$ is a good approximation of $(AA^T+\alpha I)^{-1}$, we set $\lambda=1$ in the algorithm. Anyway, the validity of the assumption $\|K^TPK\|<1$ can be guaranteed choosing $\alpha$ large enough. 

In conclusion, we consider the following version of the MLBA for the synthesis approach
\begin{equation} \label{eq:MLBA2}
    \left\{
    \begin{array}{l}
          \zb^{n+1}=\zb^{n}+WA^TP( \gb- AW^T\xb^{n}), \\
          \xb^{n+1}=\S_\mu(\zb^{n+1}),
    \end{array}
    \right.
\end{equation}
stopped by the \emph{discrepancy principle} as in \cite{COS}, i.e., at the first iteration $n=\tilde n>0$ such that
\begin{equation}\label{eq:discrpr} 
\|\rb^{\tilde n}\|\leq \gamma \delta < \|\rb^{n}\|, \qquad n=0,1,\dots, \tilde{n}-1,
\end{equation}
where $\gamma>1$, $\delta=\|\etab\|$, and $\rb^{n}=\gb- AW^T\xb^{n}$ is the residual at the $n$-th iteration.
Here $\zb^0 = \xb^0 = \bf{0}$ and we assume that the noise level $\delta$ is explicitly known.


\section{On the choice of the preconditioner $P$}\label{sect:prec}

In this section we explore possible choices of $P\neq(AA^T+\alpha I)^{-1}$ which are computationally attractive.
Let $A$ be the rectangular, anti-reflective or BTTB matrix depending on the chosen treatment of the boundary of the image,
and let $C$ be the BCCB obtained from the same PSF.
Since the matrix $P$ in Theorem~\ref{lem:modLinBregConv} serves as a preconditioner to accelerate the convergence,
in this section we describe some preconditioning strategies, in order to combine fast computations with accurate restorations achievable
when $P \approx (AA^T+\alpha I)^{-1}$.
The first proposal in Section~\ref{sect:BCCBprec} is the classical approach already used in the literature, cf. \cite{COS, COS2, STY}.
The second proposal in Section~\ref{sect:Krylovprec} is an approximation strategy considered for similar methods, c.f. \cite{AF13}, but this is the first time that it is explored with MLBA. The third proposal is inspired by a similar approach used with numerical methods that require symmetric matrices, see \cite{HN96,NCT99}.

\subsection{BCCB preconditioner} \label{sect:BCCBprec}
Let $C$ be the matrix obtained imposing periodic BCs. As described in Section \ref{ssect:BC}, 
the matrix $C$ is diagonalizable by DFT.
Hence, the matrix-vector product with the matrix
\[P=(CC^T+\alpha I)^{-1}\]
can be efficiently computed by FFT and its use  was previously proposed in \cite{COS}.

\begin{algorithm}
\begin{equation}\label{eq:Alg1}
    \left\{
    \begin{array}{l}
          \zb^{n+1}=\zb^{n}+WA^T(CC^T+\alpha I)^{-1}(\gb-AW^T\xb^{n}), \\
         \xb^{n+1}=\S_\mu(\zb^{n+1}).
    \end{array}
    \right.
\end{equation}
\end{algorithm}

We suggest to replace the DFT with the DCT, in the case of a quadrantally symmetric PSF like the Gaussian blur. 
The latter choice not only is motivated by computational considerations, since the complex DFT is replaced by a real transform (DCT), but also by the quality of the computed approximation. Indeed, using the DCT the matrix $C$ can be seen as an approximation of $A$ by imposing reflective BCs instead of periodic BCs, which results usually in a better approximation and so provides better restorations. The same expedient will be used for the following preconditioners as well.

Note that
$\|(CC^T + \alpha I)^{-1}\|<\|(AA^T + \alpha I)^{-1}\|$
is a sufficient condition to apply the Theorem \ref{lem:modLinBregConv} with $\lambda=1$.
This assumption could be hard to be satisfied in practice. Nevertheless, it is expected that 
\begin{equation}\label{eq:condconvalg1}
\|K^T(CC^T + \alpha I)^{-1}K\|<1
\end{equation}
if $\alpha$ is large enough.

In the literature regarding preconditioning of Toeplitz matrices by circulant matrices, several strategies have been proposed to compute the matrix $C$, cf. \cite{ChanNg}. In this paper we simply consider the matrix obtained imposing periodic BCs, which corresponds to the natural Strang preconditioner, since we have not observed numerical differences, when using other strategies like the optimal Frobenius norm approximation preconditioner \cite{OptC}. Roughly speaking, this follows from the fact that the
entries of $A$ depend on the value of the pixels of the PSF according to the shift invariance structure. In particular the central coefficient of the PSF belongs to the main diagonal of $A$ and the pixels near the center of the PSF belongs to the central diagonals of the central blocks. Finally, the PSF is almost centered in the middle of a $n \times n$ image and hence every pixel is distant at most $n/2$ pixels in every direction (usually much less due to the compact support).
Hence the block band and the band of each block of $A$ are at most $n/2$.

\subsection{Krylov subspace approximation} \label{sect:Krylovprec}

The preconditioner in the previous section is essentially defined as an approximation of the operator $A$ in the Fourier domain.
Another strategy, useful also for more general matrices, is to employ orthogonal or oblique projections into subspaces of small dimension.
A common choice is a proper Krylov subspace.

The matrix vector product $\tb^{n}=(AA^T + \alpha I)^{-1}\rb^{n}$
can be computed solving the linear system
\begin{equation}\label{eq:syscg}
(AA^T + \alpha I)\tb^{n}=\rb^{n},
\end{equation} 
whose solution can be approximated by few iterations of conjugate gradient (CG) since $AA^T + \alpha I$ is symmetric and positive definite. One or few steps of CG to approximate the vector $(AA^T + \alpha I)^{-1}\rb^{n}$ is a common strategy, see e.g. \cite{AF13}.
Here we explore the use of a good preconditioner associated with a proper choice of the initial guess and the stopping criteria. We solve \eqref{eq:syscg} by preconditioned CG (PCG)
with preconditioner the matrix $(CC^T + \alpha I)^{-1}$ introduced in Section \ref{sect:BCCBprec}. 
This is equivalent to solve the linear system
\begin{equation}\label{eq:psys}
(CC^T + \alpha I)^{-1/2}(AA^T + \alpha I)(CC^T + \alpha I)^{-1/2}\yb^{n}=(CC^T + \alpha I)^{-1/2}\rb^{n},
\end{equation}
with $\yb^{n} = (CC^T + \alpha I)^{1/2}\tb^{n}$.

The Krylov subspace of size $j$ generated by the matrix $B$ and the vector $\vb$ is defined by
\[
\mathcal{K}_j(B,\vb)={\rm span}\{\vb, B\vb, \dots, B^{j-1}\vb\}, \qquad j \in \ \N.
\]
We denote by $\yb^{n}_{\beta_n}$ the vector that minimizes the energy norm of the error of the linear system \eqref{eq:psys} into the Krylov subspace
\[ \mathcal{K}_{\beta_n} := \mathcal{K}_{\beta_n}\left((CC^T + \alpha I)^{-1/2}(AA^T + \alpha I)(CC^T + \alpha I)^{-1/2}, \; (CC^T + \alpha I)^{-1/2}\rb^{n}\right).\]
Therefore, defining
$$\tb^{n}_{\beta_n}=(CC^T + \alpha I)^{-1/2}\yb^{n}_{\beta_n}$$
the following algorithm can be sketched.

\begin{algorithm}
\begin{equation}\label{eq:Alg2}
    \left\{
    \begin{array}{l}
          \zb^{n+1}=\zb^{n}+WA^T\tb^{n}_{\beta_n}, \\
         \xb^{n+1}=\S_\mu(\zb^{n+1}).
    \end{array}
    \right.
\end{equation}
\end{algorithm}

Of course a large $\beta_n$ is not practical and also the convergence of the Algorithm 2 could fail 
if $\tb^{n}_{\beta_n}$ is not a good approximation of $\tb^n$ in \eqref{eq:syscg}.
However, in practice $\beta_n$ can be taken very small and the PCG converges very rapidly assuring also the convergence of the Algorithm~2 as numerically confirmed by the results in Section \ref{sec:numres}. This follows from 
discussion at the beginning of Section~\ref{ssect:BC} and the well-conditioning of the coefficient matrix of the linear system \eqref{eq:syscg}. Indeed, all the eigenvalues of $AA^T+\alpha I$ are in $[\alpha, \, c]$, with $c$ constant independent of $n$ and usually $c=1+\alpha$, because $A$ arises from the discretization of \eqref{eq:model2} and, thanks to the physical properties of the PSF (nonnegative entries and sum of all pixels equal to one), its largest singular value is bounded by one.
If follows that the BCCB preconditioner $CC^T + \alpha I$ is very effective since the spectrum of $(CC^T + \alpha I)^{-1/2}(AA^T + \alpha I)(CC^T + \alpha I)^{-1/2}$ is clustered at 1, with $O(n)$ outliers according to equations \eqref{eq:apprxAP} and \eqref{eq:ranknorm}, while $n^2$ is the total number of eigenvalues (see \cite{HN96,ChanBook}).

Moreover, we observe that if the Algorithm 2 is converging then, in the noise free case, the residual is going to zero (otherwise stagnates around $\delta$) and hence also the solution of the linear system \eqref{eq:syscg} approaches the zero vector. This has two interesting consequences. First, a good initial guess for the PCG is the zero vector since it is a good approximation of the solution of the linear system \eqref{eq:syscg}, at least for $n$ large enough. Second, the size of the Krylov subspace $\mathcal{K}_{\beta_n}$ should decreases when $n$ increases reaching the same fixed accuracy in the approximation of $\tb^n$ (see the following discussion on $\beta_n$).

Note that the computation of $\tb^{n}_{\beta_n}$ requires
$\beta_n$ matrix-vector products with $AA^T + \alpha I$ and with $CC^T + \alpha I$.
Nevertheless, according to the previous discussion, a small $\beta_n$, e.g., $\beta_n \leq 5$, is enough.
In the numerical results in Section~\ref{sec:numres} we fix
\begin{equation}\label{eq:bet}
\beta_n = \min\{5,\beta_{\rm tol}\},
\end{equation}
where $\beta_{\rm tol}$ is the number of PCG iterations required for reaching the tolerance $10^{-3}$ in
terms of the norm of the relative residual in the linear system \eqref{eq:psys}.
We observe that in our numerical results, $\beta_n$ decreases quickly obtaining $\beta_n=1$ for all $n>\bar{n}$, with
$\bar{n}$ small.

Finally, we note that the preconditioner $P$ obtained by the PCG approximation is not stationary and changes at each iteration of the MLBA. Therefore, Theorem~\ref{lem:modLinBregConv} cannot be applied. Nevertheless, Algorithm~2 with the condition \eqref{eq:bet} has been convergent in all our numerical experiments confirming that 
$\tb^{n}_{\beta_n}$ is a good approximation of $\tb^{n}$, at least for $n$ large enough.

\subsection{Preconditioning by symmetrization} \label{sec:simm}

The preconditioner in Section \ref{sect:BCCBprec} is related to a different boundary model, namely periodic BCs, but the deblurring problem and in particular the PSF are the same. Unfortunately, periodic BCs lead to poor restorations for generic images and for more accurate models, like reflective or antireflective BCs, the matrix $A$ cannot be diagonalized by fast trigonometric transforms when the PSF is not quadrantally symmetric.
Therefore, in this section we use a different strategy: the preconditioner is defined by a different PSF that leads to fast computations with an accurate deblurring model.

We consider a simple implementation of this strategy that can be useful when the PSF is experimentally measured. Indeed, in some applications, the PSF is nonsymmetric even if it is just a numerical perturbation of a Gaussian-like blur, cf. Example 1 and \cite{HN96}. Recalling that for the reflective and antireflective BCs fast transforms (cosine and antireflective, respectivelly) can be implemented only in the quadrantally symmetric case, a quadrantally symmetric PSF $\widetilde H$ can be obtained from the original PSF $H$ by defining
\[ \widetilde{H}(i,j)=\frac{H(i,j) + H(-i,j) + H(i,-j) + H(-i,-j)}{4}, \qquad i,j=1,\dots,n. \]
Note that $\widetilde{H}$ is the optimal Frobenius norm approximation of $H$ in the set of quadrantally symmetric PSFs, see\cite{NCT99}.
Therefore, we consider the matrix $Q$ obtained imposing reflective BC to $\widetilde H$ when $A$ is the BTTB or the rectangular matrix, while for $A$ antireflective, $Q$ is defined imposing antireflective BCs as well. In this way
\[ P = (QQ^T+\alpha I)^{-1} \]
can be diagonalized by DCT or by antireflective transform and the MLBA becomes

\begin{algorithm}
\begin{equation}\label{eq:Alg3}
    \left\{
    \begin{array}{l}
          \zb^{n+1}=\zb^{n}+WA^T(QQ^T+\alpha I)^{-1}(\gb-AW^T\xb^{n}), \\
         \xb^{n+1}=\S_\mu(\zb^{n+1}).
    \end{array}
    \right.
\end{equation}
\end{algorithm}

In analogy to Algorithm 1,
$\|(QQ^T + \alpha I)^{-1}\|<\|(AA^T + \alpha I)^{-1}\|$
is a sufficient condition to apply Theorem \ref{lem:modLinBregConv} with $\lambda=1$.
It is expected that 
\[
\|K^T(QQ^T + \alpha I)^{-1}K\|<1
\]
if $\alpha$ is large enough.

\section{Approximated Tikhonov regularization instead of preconditioning}\label{sect:alg4}

In this section we propose an approach to approximate $K^\dag$ different from the use of the matrix $P$ 
in \eqref{eq:P} 
as suggested in \cite{COS}.
Motivated by a very recent preconditioning proposal in 
\cite{Acqua,DH13},
we replace the whole matrix $K^\dag$ by a regularized approximation obtained by $C$.

In \cite{Acqua} the authors suggest to solve the preconditioned linear system
\[
ZA\fb=Z\gb,
\] 
where $Z$ is a regularized approximation of $K^\dag$, by a Van Cittert iteration \cite{VanC},
instead to solve a preconditioned Landweber iteration. Unfortunately, $ZA$ is not symmetric and the convergence analysis, based on the complex eigenvalues of $ZA$, is hard to generalize.
Differently, the nonstationary preconditioned iteration proposed in \cite{DH13} results in a similar iteration, but an elegant convergence analysis is provided under a minor approximation assumption. 

For $P=(AA^T + \alpha I)^{-1}$, the correction term $K^TP( \gb- K\xb^{n})$ in the MLBA \eqref{eq:MLBA}
can be seen as the Tikhonov solution, with parameter $\alpha$, of the error equation.
In detail, $\zb^{n+1}$ in the iteration \eqref{eq:MLBA} can be rewritten as
\begin{equation}\label{eq:itcorr}
          \zb^{n+1}=\zb^{n}+\pb^{n}, \\
\end{equation}
where
\begin{align*}
\pb^{n} &= K^TP( \gb- K\xb^{n}) \\
&= K^T(KK^T + \alpha I)^{-1}( \gb- K\xb^{n})\\
&=(K^TK + \alpha I)^{-1}K^T( \gb- K\xb^{n}),
\end{align*}
since
$(AA^T + \alpha I)^{-1}=(KK^T + \alpha I)^{-1}$ and
$K^T(KK^T + \alpha I)^{-1} = (K^TK + \alpha I)^{-1}K^T$.
Note that the correction $\pb^{n}$ is the solution of the Tikhonov problem
\[
\min_{\pb \in \R^s}\{\|K\pb - \rb^{n}\|^2 + \alpha \|\pb\|^2\}, 
\]
which is a regularized approximation of the error equation
\begin{equation}\label{eq:err}
K\eb^{n}=\rb^{n}
\end{equation}
in the noise free case, i.e., $\delta=0$,
where $\eb^{n} = \xb - \xb^{n}$ denotes the error at the current iteration.

In real applications $\delta \neq 0$ and so
 equation \eqref{eq:err} is (only) correct up to
the perturbation in the data.
Taking this into account, one may as well
consider instead of the error equation~\eqref{eq:err} the ``model equation''
\begin{equation}\label{eq:erreq}
   L\eb^{n}=\rb^{n}\,,
\end{equation}
where $L$ is an approximation of $K$,
possibly tolerating a slightly larger misfit. Solving \eqref{eq:erreq}
by means of Tikhonov regularization, we find
\begin{align*}
\widetilde \pb^{n} &= (L^TL + \alpha I)^{-1}L^T\rb^{n}\\
&= WC^T(CC^T + \alpha I)^{-1}\rb^{n},
\end{align*}
where we have choosen
\begin{equation}\label{eq:L}
L=CW^T.
\end{equation}
Using $\widetilde \pb^{n}$ in \eqref{eq:itcorr} to replace $\pb^{n}$
we obtain a new algorithm.

\begin{algorithm}
\begin{equation}\label{eq:Alg4}
    \left\{
    \begin{array}{l}
          \zb^{n+1}=\zb^{n}+WC^T(CC^T+\alpha I)^{-1}(\gb-AW^T\xb^{n}), \\
         \xb^{n+1}=\S_\mu(\zb^{n+1}).
    \end{array}
    \right.
\end{equation}
\end{algorithm}

As before, the matrix $C$ is chosen as a BCCB in general or diagonalizable by DCT (i.e., the reflective BC matrix), when the PSF is quadrantally symmetric.
Unfortunately, this preconditioning strategy cannot be applied to the rectangular matrix approach
because, in such case, $C$ should have the same size of $A$, but this condition prevents the possibility
of computing $\widetilde \pb^{n}$ by fast trigonometric transforms.

\begin{remark}\label{rem:prec2}
The iteration \eqref{eq:Alg4} uses the preconditioned linear system
\begin{equation}\label{eq:precsys}
WC^T(CC^T+\alpha I)^{-1}K\xb=WC^T(CC^T+\alpha I)^{-1}\gb,
\end{equation}
to update an aproximation inspired by the linearized Bregman iteration \eqref{eq:iterLinBreg}, but without resorting to the normal equations.
\end{remark}

Clearly, Algorithm 4 is no longer a MLBA and so a different convergence analysis is required. 
Unfortunately, classical results for convex optimization cannot be applied since
the coefficient matrix $WC^T(CC^T+\alpha I)^{-1}K$ in \eqref{eq:precsys} is not symmetric positive definite.
An alternative convergence proof could be very hard because also the complex analysis convergence in 
 \cite{Acqua} cannot be easily combined with the Bregman splitting and soft-thresholding.

Therefore, accordingly to \cite{DH13}, we consider a nonstationary choice of $\alpha$ that allows to provide a convergence analysis of the resulting algorithm and avoid the a-priori choice of $\alpha$. On the other hand, if a good estimation of $\alpha$ is available, then Algorithm 4 can provides better restorations and hence it is also considered in the numerical results in Section \ref{sec:numres}.

\begin{assumption}\label{hp:1}
Let $A,C\in\R^{n^2 \times n^2}$ and $W\in\R^{n^2 \times s}$, $n^2\leq s$, such that
\begin{subequations}\label{eq:Alg4_rev_00}
\begin{equation}\label{eq:Alg4_rev_0}
\| \left( C - A \right) \vb \| \leq \rho \| A \vb \|, \qquad \forall \, \vb \in \R^{n^2},
\end{equation}
and
\begin{equation}\label{eq:Alg4_rev_01}
\| CW^T(\ub- S_\mu(\ub))\| \leq \rho \delta, \qquad \forall \, \ub \in \R^s,
\end{equation}
\end{subequations}
with a fixed $0 < \rho < 1/2$,
where $\delta=\|\etab\|$ is the noise level.
\end{assumption}

The assumption \eqref{eq:Alg4_rev_0} is the same spectral equivalence required in \cite{DH13}.
Let $L$ be defined in \eqref{eq:L}, then
equation \eqref{eq:Alg4_rev_0} translates into
\begin{equation}\label{eq:Alg4_rev_1}
\| ( L - K ) \ub \| \leq \rho \| K \ub \|, \qquad \forall \, \ub \in \R^s\!.
\end{equation}
Instead, the assumption \eqref{eq:Alg4_rev_01} was not present in \cite{DH13}
and it is equivalent to consider the
soft-threshold parameter $\mu$ as a continuous function with respect to the noise level $\delta$, i.e., $\mu = \mu (\delta)$, and such that $\mu (\delta) \to 0$ as $\delta \to 0 $. This is a common request in many soft-thresholding based methods, see for instance Theorem 4.1 in \cite{D3}.
Nevertheless, in this paper we will not concentrate on $\mu$ and will not give any specific $\delta$-dependent rule to compute it. 

\begin{algorithm*}  \textbf{\emph{4--NS.}}
Let $\zb^0$ be given and set $\rb^0 = \gb - K S_\mu(\zb^0)$. 
 Choose $\tau = \frac{1+ 2\rho}{1- 2\rho}$ 
 with $\rho$ from \eqref{eq:Alg4_rev_0}, and fix $q \in (2\rho, 1)$.

While $\| \rb^n \| > \tau \delta$, let $\tau_n = \|\rb^n \|/ \delta$ and let $\alpha_n$ be such that
\begin{subequations}
\begin{eqnarray}\label{eq:Alg4_rev_7}
\alpha_n \| (CC^T + \alpha_n I)^{-1} \rb^n\| = q_n \|\rb^n\|, \qquad q_n = \max \{ q, 2\rho + (1+\rho)/\tau_n \},
\end{eqnarray}
compute 
\begin{equation}\label{eq:Alg4_revised}
    \left\{
    \begin{array}{l}
          \zb^{n+1}=\zb^{n}+ WC^T(CC^T+\alpha_n I)^{-1}(\gb - AW^T\xb^{n}), \\
         \xb^{n+1}=\S_\mu(\zb^{n+1}).
    \end{array}
    \right.
\end{equation}
\end{subequations}
\end{algorithm*}

Note that the iteration \eqref{eq:Alg4_revised} is the same of Algorithm 4 where a nonstationary $\alpha$ is chosen at every 
iteration according to \eqref{eq:Alg4_rev_7}.
In Corollary \ref{cor:Alg4_rev_1} we will prove that, if $\delta >0$, then Algorithm 4--NS will terminate after $n = n_\delta \geq 0$ iterations with
\begin{equation}\label{eq:Alg4_rev_12}
\| \rb^{n_\delta} \| \leq \tau \delta < \| \rb^{n} \|, \qquad n= 0,1, \cdots, n_\delta -1,
\end{equation}
which is the \textnormal{discrepancy principle} \eqref{eq:discrpr} with $\tilde{n}=n_\delta$ and  $\gamma=\tau = (1+2\rho)/(1-2\rho)$.

The parameter $q$ in Algorithm 4, like in \cite{DH13}, is meant as a safeguard to prevent that the residual decreases too rapidly. Our theoretical results do not utilize this parameter. 

\begin{remark}\label{remark:Alg4_rev_1}
It is not difficult to see that there is a unique positive parameter $\alpha_n$ that satisfies \eqref{eq:Alg4_rev_7}. This parameter can be computed with a few step of an appropriate Newton scheme~\cite{EHN96}. Accordingly, parameter $\alpha_n$, and therefore Algorithm 4--NS, are well defined.
\end{remark}

We define 
\begin{equation}\label{eq:hath}
\hb^n = L^T (LL^T + \alpha_n I)^{-1} (\gb - K\S_\mu (\zb^{n})),
\end{equation}
such that \eqref{eq:Alg4_revised} can be compactly rewritten as
\[
    \left\{
    \begin{array}{l}
          \zb^{n+1}=\zb^{n}+ \hb^n, \\
         \xb^{n+1}=\S_\mu(\zb^{n+1}).
    \end{array}
    \right.
\]

For the purpose of the subsequent convergence and regularization results, when $\delta >0$, even if it will be always the case, we will highlight by the subscript $\delta$ (for instance $\{\xb_\delta^n\}$) the sequences generated by Algorithm 4--NS, starting from initial data $\gb ^\delta = A\fb + \etab$ affected by noise, whereas we avoid the subscript (for instance $\{\xb ^n\}$) for the sequences generated starting from exact initial data $\gb = A\fb$, i.e., $\delta=0$.

For the following analysis, instead of working with the error 
$\eb^n _\delta = \xb - \xb^n _\delta$,
it is useful to consider the partial error with respect to $\zb^n _\delta$, namely
\begin{equation}\label{eq:tildee}
\tilde{\eb}^n _\delta = \xb - \zb^n _\delta.
\end{equation}

\begin{proposition}\label{prop:Alg4_rev_1}
Assume that the assumptions \eqref{eq:Alg4_rev_00}
are satisfied for some $0 < \rho < 1/2$. 
If $\| \rb^n _\delta\| > \tau \delta$ and we define $\tau_n = \| \rb^n_\delta \|/ \delta$, then it follows that
\begin{equation}\label{eq:Alg4_rev_10}
\| \rb^n _\delta - L\tilde{\eb}^n _\delta \| \leq \left(  \rho + \frac{1+ 2\rho}{\tau_n}  \right) \| \rb^n _\delta \| 
< (1-\rho) \| \rb^n _\delta \|,
\end{equation}
where $\tilde{\eb}^n$ is defined in \eqref{eq:tildee}.
\end{proposition}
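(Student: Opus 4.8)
The plan is to establish the first (nonstrict) bound in \eqref{eq:Alg4_rev_10} by decomposing $\rb^n_\delta - L\tilde{\eb}^n_\delta$ into three pieces, each governed by one ingredient (the spectral equivalence, the noise level, and the soft-thresholding assumption), and then to read off the strict bound from the standing hypothesis $\|\rb^n_\delta\| > \tau\delta$.

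First I would unfold all the definitions. Since the noisy data satisfy $\gb^\delta = K\xb + \etab$, we have $K\xb = \gb^\delta - \etab$; together with $\xb^n_\delta = \S_\mu(\zb^n_\delta)$ and $\rb^n_\delta = \gb^\delta - K\xb^n_\delta$, I substitute into $\rb^n_\delta - L\tilde{\eb}^n_\delta = \rb^n_\delta - L(\xb - \zb^n_\delta)$. The aim of the algebra is to regroup so that the operator $K - L$ acts only on $\xb - \xb^n_\delta$ and the operator $L$ acts only on $\zb^n_\delta - \xb^n_\delta$. Adding and subtracting $(K-L)\xb^n_\delta$, I expect the identity
\[
\rb^n_\delta - L\tilde{\eb}^n_\delta = (K-L)(\xb - \xb^n_\delta) + \etab + L\bigl(\zb^n_\delta - \S_\mu(\zb^n_\delta)\bigr).
\]
Locating this particular splitting is the only nonroutine part of the argument; once it is in hand the estimate is immediate.

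Then I would bound the three terms separately. For the first term the spectral equivalence \eqref{eq:Alg4_rev_1} gives $\|(K-L)(\xb - \xb^n_\delta)\| \leq \rho\|K(\xb - \xb^n_\delta)\|$, and the key linking identity $K(\xb - \xb^n_\delta) = \rb^n_\delta - \etab$ (from $K\xb = \gb^\delta - \etab$ and $\rb^n_\delta = \gb^\delta - K\xb^n_\delta$) yields $\|K(\xb - \xb^n_\delta)\| \leq \|\rb^n_\delta\| + \delta$. The middle term contributes exactly $\|\etab\| = \delta$. The last term is precisely the left-hand side of assumption \eqref{eq:Alg4_rev_01} evaluated at $\ub = \zb^n_\delta$, so it is at most $\rho\delta$. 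Summing the three bounds gives $\|\rb^n_\delta - L\tilde{\eb}^n_\delta\| \leq \rho\|\rb^n_\delta\| + (1+2\rho)\delta$.

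Finally, substituting $\delta = \|\rb^n_\delta\|/\tau_n$ turns the right-hand side into $\bigl(\rho + (1+2\rho)/\tau_n\bigr)\|\rb^n_\delta\|$, which is the first inequality. The strict inequality then reduces to the scalar comparison $\rho + (1+2\rho)/\tau_n < 1-\rho$, equivalently $\tau_n > (1+2\rho)/(1-2\rho) = \tau$ (using $\rho < 1/2$, so $1-2\rho>0$); this is exactly the hypothesis $\|\rb^n_\delta\| > \tau\delta$, i.e.\ $\tau_n > \tau$. I anticipate no real obstacle beyond correctly performing the initial decomposition and carefully tracking the $\etab$ contributions.
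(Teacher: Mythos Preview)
Your proof is correct and follows essentially the same route as the paper: the same three-term decomposition $\rb^n_\delta - L\tilde{\eb}^n_\delta = (K-L)\eb^n_\delta + \etab + L(\zb^n_\delta - \S_\mu(\zb^n_\delta))$ (the paper writes $\gb^\delta - \gb$ in place of $\etab$), the same use of \eqref{eq:Alg4_rev_1} together with $K\eb^n_\delta = \rb^n_\delta - \etab$ and assumption \eqref{eq:Alg4_rev_01}, and the same reduction of the strict inequality to $\tau_n > \tau$.
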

\begin{proof}
In the free noise case we have $\gb = K\xb$. As a consequence
\begin{align*}
\rb^n _\delta - L\eb^n _\delta & = \gb^\delta - K \xb^n _\delta -  L(\xb - \zb^n _\delta) + L \xb^n _\delta - L S_\mu(\zb^n _\delta) \\
	& = \gb^\delta - \gb + (K - L)\eb^n _\delta + L(\zb^n _\delta-S_\mu(\zb^n _\delta)).
\end{align*}
Using now assumptions \eqref{eq:Alg4_rev_00}, in particular 
\eqref{eq:Alg4_rev_1}, and $\| \gb^\delta - \gb \| \leq \delta$, we derive the following estimate
\begin{align*}
\| \rb^n _\delta - L\eb^n _\delta \| &\leq  \| \gb^\delta - \gb \| + \| (K - L)\eb^n _\delta  \| + \|L(\zb^n _\delta-S_\mu(\zb^n _\delta))\|\\
&\leq  \| \gb^\delta - \gb \| + \rho \| K\eb^n _\delta \| +\rho\delta\\
												  & \leq  \| \gb^\delta - \gb \| + \rho (\|\rb^n _\delta \| + \| \gb^\delta - \gb \|  + \delta)\\
												  &\leq (1 + 2\rho) \delta + \rho \|\rb^n _\delta \|.
\end{align*}
The first inequality in \eqref{eq:Alg4_rev_10} now follows from the hypothesis $\delta = \|\rb^n _\delta \|/ \tau_n$.
The second inequality  follows from $ \rho + \frac{1+ 2\rho}{\tau_n} < \rho + \frac{1+ 2\rho}{\tau}$.
\end{proof}

We are going to show that the sequence $\{ \xb^n _\delta\}$ approaches $\xb$ as $\delta \to 0$. The proof combines Proposition~\ref{prop:Alg4_rev_1} with suitable modifications of the results in \cite{DH13}.

\begin{proposition}\label{prop:Alg4_rev_2}
Let $\tilde{\eb}^n _\delta$ be defined in \eqref{eq:tildee}. 
If the assumptions \eqref{eq:Alg4_rev_00} are satisfied, then $\|\tilde{\eb}^n _\delta\|$ of Algorithm 4--NS decreases monotonically for $n=0,1,\dots,n_\delta-1$. In particular, we deduce
\begin{equation}\label{eq:Alg4_rev_6}
\| \tilde{\eb}^n _\delta \|^2 - \| \tilde{\eb}^{n+1} _\delta \|^2 \geq \frac{8\rho^2}{1 + 2\rho} \| (CC^T + \alpha_n )^{-1} \rb^n _\delta \| \|\rb^n  _\delta\|.
\end{equation}
\end{proposition}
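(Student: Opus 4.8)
The plan is to track the one-step change of the partial error and bound it below by the claimed quantity. Since the inner update of Algorithm~4--NS is $\zb^{n+1}_\delta = \zb^n_\delta + \hb^n$ with $\hb^n = L^T(LL^T + \alpha_n I)^{-1}\rb^n_\delta$ as in \eqref{eq:hath}, and $\tilde{\eb}^n_\delta = \xb - \zb^n_\delta$, I would first write $\tilde{\eb}^{n+1}_\delta = \tilde{\eb}^n_\delta - \hb^n$, whence
\[
\|\tilde{\eb}^n_\delta\|^2 - \|\tilde{\eb}^{n+1}_\delta\|^2 = 2\langle \tilde{\eb}^n_\delta, \hb^n\rangle - \|\hb^n\|^2 .
\]
It is convenient to set $M_n = (CC^T + \alpha_n I)^{-1}$; since $W^TW = I$ gives $LL^T = CW^TWC^T = CC^T$, one has $\hb^n = L^T M_n \rb^n_\delta$, so both terms on the right can be expressed through $M_n\rb^n_\delta$.

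Next I would evaluate the two pieces separately. Using $LL^T M_n = I - \alpha_n M_n$, the squared step length becomes $\|\hb^n\|^2 = \langle \rb^n_\delta, M_n\rb^n_\delta\rangle - \alpha_n\|M_n\rb^n_\delta\|^2$. For the cross term I would insert the splitting $L\tilde{\eb}^n_\delta = \rb^n_\delta - (\rb^n_\delta - L\tilde{\eb}^n_\delta)$ and invoke Proposition~\ref{prop:Alg4_rev_1}, which controls the defect $\|\rb^n_\delta - L\tilde{\eb}^n_\delta\| \le (\rho + (1+2\rho)/\tau_n)\|\rb^n_\delta\|$. Collecting terms yields
\[
\|\tilde{\eb}^n_\delta\|^2 - \|\tilde{\eb}^{n+1}_\delta\|^2 = \langle \rb^n_\delta, M_n\rb^n_\delta\rangle - 2\langle \rb^n_\delta - L\tilde{\eb}^n_\delta,\, M_n\rb^n_\delta\rangle + \alpha_n\|M_n\rb^n_\delta\|^2 .
\]

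The delicate point --- and the step I expect to be the main obstacle --- is that bounding the middle term by Cauchy--Schwarz and then simply discarding $\langle \rb^n_\delta, M_n\rb^n_\delta\rangle \ge 0$ leaves the coefficient $q_n - 2(\rho + (1+2\rho)/\tau_n)$ in front of $\|\rb^n_\delta\|\,\|M_n\rb^n_\delta\|$, which is negative and therefore useless. The resolution is the spectral inequality $\alpha_n\|M_n\rb^n_\delta\|^2 \le \langle \rb^n_\delta, M_n\rb^n_\delta\rangle$, valid because $M_n$ is symmetric positive definite with $\|M_n\| \le 1/\alpha_n$, so that $M_n^2 \preceq \alpha_n^{-1}M_n$. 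Combined with the defining equation \eqref{eq:Alg4_rev_7}, namely $\alpha_n\|M_n\rb^n_\delta\| = q_n\|\rb^n_\delta\|$, this shows that \emph{both} the positive first term and the positive last term are at least $q_n\|\rb^n_\delta\|\,\|M_n\rb^n_\delta\|$; this ``double counting'' is what salvages the estimate.

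Finally I would assemble the bound. The two identified contributions give $2q_n\|\rb^n_\delta\|\,\|M_n\rb^n_\delta\|$, while Cauchy--Schwarz together with Proposition~\ref{prop:Alg4_rev_1} bounds the middle term by $2(\rho + (1+2\rho)/\tau_n)\|\rb^n_\delta\|\,\|M_n\rb^n_\delta\|$. Using $q_n \ge 2\rho + (1+\rho)/\tau_n$ from \eqref{eq:Alg4_rev_7}, the net coefficient is at least $2\rho(1 - 1/\tau_n)$; and since $\tau_n = \|\rb^n_\delta\|/\delta > \tau = (1+2\rho)/(1-2\rho)$ on the range $n = 0,\dots,n_\delta-1$, one gets $1 - 1/\tau_n > 4\rho/(1+2\rho)$, producing exactly the constant $8\rho^2/(1+2\rho)$ of \eqref{eq:Alg4_rev_6}. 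Monotonic decrease of $\|\tilde{\eb}^n_\delta\|$ on this range is then immediate, since the right-hand side of \eqref{eq:Alg4_rev_6} is a product of nonnegative factors.
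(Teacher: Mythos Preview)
Your proof is correct and follows essentially the same route as the paper. The only cosmetic difference is bookkeeping: the paper writes $\|\hb^n\|^2=\langle\rb^n_\delta, CC^T M_n^2\rb^n_\delta\rangle$ and then drops a nonnegative copy of that term, whereas you write $\|\hb^n\|^2=\langle\rb^n_\delta, M_n\rb^n_\delta\rangle-\alpha_n\|M_n\rb^n_\delta\|^2$ and invoke the equivalent spectral inequality $\alpha_n\|M_n\rb^n_\delta\|^2\le\langle\rb^n_\delta, M_n\rb^n_\delta\rangle$; both reductions land on $2\alpha_n\|M_n\rb^n_\delta\|^2=2q_n\|\rb^n_\delta\|\,\|M_n\rb^n_\delta\|$ and the remaining arithmetic with $q_n$ and $\tau_n$ is identical.
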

\begin{proof}
We have
\begin{align*}
\| \tilde{\eb}^n _\delta \|^2 - \| \tilde{\eb}^{n+1} _\delta \|^2 &= 2\langle\tilde{\eb}^n _\delta, \hb^n \rangle - \| \hb^n \|^2\\
&= 2\langle L\tilde{\eb}^n _\delta, (CC^T + \alpha_n I)^{-1} \rb^n _\delta \rangle - \langle  \rb^n _\delta, CC^T (CC^T + \alpha_n I)^{-2}  \rb^n _\delta \rangle \\
&= 2 \langle  \rb^n _\delta, (CC^T + \alpha_n I)^{-1}  \rb^n _\delta \rangle -   \langle  \rb^n _\delta, CC^T (CC^T + \alpha_n I)^{-2}  \rb^n _\delta \rangle \\
&- 2 \langle  \rb^n _\delta - L\tilde{\eb}^n _\delta, (CC^T + \alpha_n I)^{-1}  \rb^n _\delta \rangle\\
&\geq 2 \langle  \rb^n _\delta, (CC^T + \alpha_n I)^{-1}  \rb^n _\delta \rangle - 2  \langle  \rb^n _\delta, CC^T (CC^T + \alpha_n I)^{-2}  \rb^n _\delta \rangle \\
&- 2 \langle  \rb^n _\delta - L\tilde{\eb}^n _\delta, (CC^T + \alpha_n I)^{-1}  \rb^n _\delta \rangle\\
&= 2 \alpha_n \langle  \rb^n _\delta, (CC^T + \alpha_n I)^{-2}  \rb^n _\delta \rangle - 2 \langle  \rb^n _\delta - L\tilde{\eb}^n _\delta, (CC^T + \alpha_n I)^{-1}  \rb^n _\delta \rangle\\
&\geq  2 \alpha_n \langle  \rb^n _\delta, (CC^T + \alpha_n I)^{-2}  \rb^n _\delta \rangle - 2 \|  \rb^n _\delta - L\tilde{\eb}^n _\delta \| \| (CC^T + \alpha_n I)^{-1}  \rb^n _\delta \|\\
&= 2 \| (CC^T + \alpha_n I)^{-1}  \rb^n _\delta \| \left( \| \alpha_n  (CC^T + \alpha_n I)^{-1}  \rb^n _\delta\| - \| \rb^n _\delta - L\tilde{\eb}^n _\delta \|  \right)\\
&\geq  2 \| (CC^T + \alpha_n I)^{-1}  \rb^n _\delta \| \left( q_n \| \rb^n _\delta\|  - \left( \rho + \frac{1+ 2\rho}{\tau_n}  \right) \| \rb^n _\delta\|  \right)\\
&\geq \frac{8\rho^2}{1 + 2\rho}  \| (CC^T + \alpha_n I)^{-1}  \rb^n _\delta \|\| \rb^n _\delta\|,
\end{align*}
where the relevant inequalities are a consequence of equation \eqref{eq:Alg4_rev_7} and Proposition~\ref{prop:Alg4_rev_1}. 
The last inequality follows from \eqref{eq:Alg4_rev_7} and $\tau_n > \tau=(1+2\rho)/(1-2\rho)$ for $\| \rb^n _\delta \| > \tau \delta$. 
\end{proof}

\begin{corollary}\label{cor:Alg4_rev_1}
Under the assumptions \eqref{eq:Alg4_rev_00}, there holds
\begin{equation}\label{eq:Alg4_rev_11}
\| \tilde{\eb}^0 _\delta \| \geq  \frac{8\rho^2}{1 + 2\rho} \sum_{n=0}^{n_\delta -1}  \| (CC^T + \alpha_n I)^{-1}  \rb^n _\delta \|\| \rb^n _\delta\| \geq c  \sum_{n=0}^{n_\delta -1}\| \rb^n _\delta\|^2
\end{equation}
for some constant $c >0$, depending only on $\rho$ and $q$ in \eqref{eq:Alg4_rev_7}.
\end{corollary}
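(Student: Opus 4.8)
The plan is to derive both inequalities in \eqref{eq:Alg4_rev_11} from the one-step estimate in Proposition~\ref{prop:Alg4_rev_2}: the first by a telescoping sum, the second by a uniform upper bound on the regularization parameters $\alpha_n$.

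First I would sum the monotonicity estimate \eqref{eq:Alg4_rev_6} over $n=0,1,\dots,n_\delta-1$. The left-hand sides form a telescoping sum that collapses to $\|\tilde{\eb}^0_\delta\|^2-\|\tilde{\eb}^{n_\delta}_\delta\|^2$, and discarding the nonnegative terminal term $\|\tilde{\eb}^{n_\delta}_\delta\|^2$ yields
\[
\|\tilde{\eb}^0_\delta\|^2 \;\geq\; \frac{8\rho^2}{1+2\rho}\sum_{n=0}^{n_\delta-1}\|(CC^T+\alpha_n I)^{-1}\rb^n_\delta\|\,\|\rb^n_\delta\|,
\]
which is the first inequality of \eqref{eq:Alg4_rev_11}, with the left member read as $\|\tilde{\eb}^0_\delta\|^2$ (the norm on the left should be squared to match the telescoped quantity). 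This step is immediate and uses nothing beyond \eqref{eq:Alg4_rev_6}.

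The second inequality reduces to bounding the ratio $\|(CC^T+\alpha_n I)^{-1}\rb^n_\delta\|/\|\rb^n_\delta\|$ from below, uniformly in $n$. By the defining relation \eqref{eq:Alg4_rev_7} this ratio equals exactly $q_n/\alpha_n$, and since $q_n\geq q$ it suffices to produce a uniform upper bound $\bar\alpha$ on the parameters $\alpha_n$. To this end I would combine \eqref{eq:Alg4_rev_7} with the elementary spectral bound $\|(CC^T+\alpha_n I)^{-1}\rb^n_\delta\|\geq\|\rb^n_\delta\|/(\|CC^T\|+\alpha_n)$, which gives $q_n\geq\alpha_n/(\|CC^T\|+\alpha_n)$ and hence
\[
\alpha_n\;\leq\;\frac{q_n\,\|CC^T\|}{1-q_n}.
\]
Using $\|CC^T\|\leq 1$ (the normalization of the PSF, already invoked for the spectrum of $AA^T+\alpha I$ in Section~\ref{sect:Krylovprec}), the constant $c=\tfrac{8\rho^2}{1+2\rho}\,q/\bar\alpha$ then depends only on $\rho$ and $q$, as claimed.

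The crux, and the only nontrivial point, is checking that $q_n$ stays uniformly bounded away from $1$, so that $q_n/(1-q_n)$ is controlled; everything else is mechanical. Here I would use $\tau_n>\tau=(1+2\rho)/(1-2\rho)$ together with the standing hypothesis $\rho<1/2$. A short computation shows $2\rho+(1+\rho)/\tau=(2\rho^2+\rho+1)/(1+2\rho)$, and this is strictly less than $1$ precisely when $\rho<1/2$; since $\tau_n>\tau$ only decreases the term $(1+\rho)/\tau_n$, one obtains $q_n=\max\{q,\,2\rho+(1+\rho)/\tau_n\}\leq\bar q:=\max\{q,(2\rho^2+\rho+1)/(1+2\rho)\}<1$. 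Setting $\bar\alpha=\bar q/(1-\bar q)$ (monotonicity of $x\mapsto x/(1-x)$ on $(0,1)$) then closes the argument. The delicate role of the assumption $\rho<1/2$ is exactly what guarantees $\bar q<1$, and hence a finite $\bar\alpha$ and a strictly positive $c$.
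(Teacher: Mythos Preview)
Your proof is correct and follows essentially the same route as the paper: telescoping \eqref{eq:Alg4_rev_6} for the first inequality, then bounding $\alpha_n\le q_n\|CC^T\|/(1-q_n)$ via the spectral estimate and using $\tau_n>\tau$ with $\rho<1/2$ to keep $q_n$ uniformly below $1$. Your explicit observation that the left-hand side must be $\|\tilde{\eb}^0_\delta\|^2$ (a typo in the statement) and your appeal to the PSF normalization $\|CC^T\|\le 1$ to make $c$ depend only on $\rho$ and $q$ are both appropriate clarifications.
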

\begin{proof}
The following proof is almost the same as Corollary 3 in \cite{DH13}, but we include it to make the paper self contained.

The first inequality follows by taking the sum of the quantities in \eqref{eq:Alg4_rev_6} from $n=0$ up to $n= n_\delta -1$. 

For the second inequality, note that for every $\alpha > \frac{q_n \| C \|^2 }{1 - q_n}$ and every $\sigma \in \sigma(C) \subset [0,\| C \|^2]$, with $\sigma(C)$ being the spectrum of $C$, we have
$$
\frac{\alpha}{\sigma^2 + \alpha} \geq \frac{\alpha}{\| C \|^2 + \alpha} = (1 + \|C\|^2/\alpha)^{-1} > q_n,
$$
and hence,
$$
\alpha \| (CC^T + \alpha I)^{-1} \rb^n _\delta \| > q_n \|  \rb^n _\delta \|,
$$
as $\|  \rb^n _\delta \|>0$ for $n < n_\delta$. This implies that  $\alpha_n$ in \eqref{eq:Alg4_rev_7} satisfies $0~< ~\alpha_n ~\leq ~\frac{q_n \|C \|^2}{1 - q_n}$, thus
$$
\| (CC^T + \alpha_n I)^{-1}  \rb^n _\delta \| = \frac{q_n}{\alpha_n} \|  \rb^n _\delta \| \geq \frac{(1 - q_n)}{\| C\|^2} \| \rb^n _\delta \|.
$$
Now, according to the choice of parameters in Algorithm 4-NS, we deduce \\$1 - q_n = \min \{1 - q, 1 - 2\rho - (1 + \rho)/\tau_n   \}$, and 
$$
1 - 2\rho - (1 + \rho)/\tau_n = \frac{1 + 2\rho}{\tau} - \frac{1 + \rho}{\tau_n} > \frac{1 + 2\rho}{\tau} - \frac{1 + \rho}{\tau} = \frac{\rho}{\tau}.
$$
Therefore, there exists $c >0$, depending only on $\rho$ and $q$ such that $1 - q_n \geq c \|C \|^2  \left(\frac{8\rho^2}{1 + 2\rho}\right)^{-1}$, and
$$
\| (CC^T + \alpha_n I)^{-1}  \rb^n _\delta \| \geq c  \left(\frac{8\rho^2}{1 + 2\rho}\right)^{-1} \|  \rb^n _\delta \| \qquad \mbox{for } n= 0, 1, \cdots, n_\delta -1.
$$
Now the second inequality follows immediately.
\end{proof}

From the outer inequality of \eqref{eq:Alg4_rev_11} it can be seen that the sum of the squares of the residual norms is bounded, and hence, if $\delta >0$, there must be a first integer $n_\delta < \infty$ such that \eqref{eq:Alg4_rev_12} is fulfilled, i.e., Algorithm 4-NS terminates after finitely many iterations. 

\begin{remark}\label{remark:Alg4_rev_2}
Recalling that 
the soft-threshold parameter $\mu$ is taken as a continuous function with respect to the noise level $\delta$ such that $\mu (\delta) \to 0$ as $\delta \to 0 $, then the operator $\gb \mapsto \zb^n$ is continuous for every fixed $n$.
\end{remark}

In the next theorem we are going to give a convergence and regularity result.
\begin{theorem}
Assume that $\zb^0$ is not a solution of the linear system
\begin{equation}\label{eq:O}
\gb = A W^T \xb,
\end{equation}
 and that $\delta_m$ is a sequence of positive real numbers such that $\delta_m \to 0$ as $m \to \infty$. Then, the sequence $\{ \xb^{n(\delta_m)} _{\delta_m}  \}_{m \in \mathbb{N}}$, generated by the discrepancy principle rule \eqref{eq:Alg4_rev_12}, converges as $m \to \infty$ to the solution of \eqref{eq:O} which is closest to $\zb ^0$ in Euclidean norm.
\end{theorem}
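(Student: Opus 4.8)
The plan is to follow the standard template for a posteriori stopped iterative regularization, as in \cite{DH13,EHN96}, in two stages: first an idealized noise-free convergence statement, and then a stability-plus-interpolation argument that transfers it to the discrepancy-stopped noisy iterates. Throughout, let $\xb^\dagger$ denote the solution of \eqref{eq:O} closest to $\zb^0$. Such a solution exists since the exact equation is consistent ($\xb = W\fb$ solves $K\xb = AW^TW\fb = A\fb = \gb$ because $W^TW = I$), and its solution set is an affine translate of $\ker(K)$; hence $\xb^\dagger$ is the \emph{unique} point of the solution set lying in $\zb^0 + \ker(K)^\perp$. The structural fact that pins down the limit is a range identity: the spectral equivalence \eqref{eq:Alg4_rev_0} gives $(1-\rho)\|A\vb\| \leq \|C\vb\| \leq (1+\rho)\|A\vb\|$, so $\ker(C) = \ker(A)$ and $\Range(C^T) = \Range(A^T)$; applying $W$ yields $\Range(WC^T) = \Range(WA^T) = \Range(K^T) = \ker(K)^\perp$. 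Consequently every increment $\hb^n$ of \eqref{eq:hath} lies in $\ker(K)^\perp$, so $\zb^n - \zb^0 \in \ker(K)^\perp$ for all $n$, in both the noisy and the noise-free iteration.

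For the noise-free stage I use that $\mu(\delta) \to 0$, so the idealized sequence generated from exact data $\gb$ runs with $\mu = 0$; then $\S_\mu$ is the identity, $\xb^n = \zb^n$, and Algorithm~4--NS reduces to the nonstationary preconditioned iteration of \cite{DH13}. Proposition~\ref{prop:Alg4_rev_2} with $\delta = 0$ shows $\|\xb^\dagger - \zb^n\|$ is nonincreasing, so $\{\zb^n\}$ is bounded, while summing the estimate of Proposition~\ref{prop:Alg4_rev_2} together with the lower bound established inside Corollary~\ref{cor:Alg4_rev_1} gives $\sum_n \|\rb^n\|^2 < \infty$, whence $\rb^n = \gb - K\zb^n \to 0$. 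Any cluster point $\zb^*$ of $\{\zb^n\}$ then satisfies $K\zb^* = \gb$ and, as a limit of points of the closed affine set $\zb^0 + \ker(K)^\perp$, also $\zb^* \in \zb^0 + \ker(K)^\perp$; by the uniqueness above $\zb^* = \xb^\dagger$. A bounded sequence in finite dimension with a single cluster point converges, so $\zb^n \to \xb^\dagger$.

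For the transfer to the noisy, stopped iterates I take $\xb = \xb^\dagger$ in Proposition~\ref{prop:Alg4_rev_2}, which provides the Fej\'er-type monotonicity $\|\xb^\dagger - \zb^{n+1}_\delta\| \leq \|\xb^\dagger - \zb^n_\delta\|$ for $n < n_\delta$ in the noisy case. Given $\delta_m \to 0$, I show that every subsequence of $\{\xb^{n(\delta_m)}_{\delta_m}\}$ has a further subsequence converging to $\xb^\dagger$, which forces convergence of the whole sequence. Along such a subsequence two cases occur. If $n(\delta_m)$ admits a bounded subsequence, I pass to a constant value $n(\delta_m) \equiv N$; continuity of $\gb \mapsto \zb^N$ (Remark~\ref{remark:Alg4_rev_2}) gives $\zb^N_{\delta_m} \to \zb^N_0$ and $\rb^N_{\delta_m} \to \rb^N_0$, while the discrepancy principle \eqref{eq:Alg4_rev_12} forces $\|\rb^N_{\delta_m}\| \leq \tau\delta_m \to 0$, hence $\rb^N_0 = 0$; thus $\zb^N_0$ is an exact solution in $\zb^0 + \ker(K)^\perp$, i.e. $\zb^N_0 = \xb^\dagger$. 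If instead $n(\delta_m) \to \infty$, I fix $\varepsilon > 0$, choose $N$ with $\|\xb^\dagger - \zb^N_0\| < \varepsilon$ by the noise-free stage, and for $m$ large (so $n(\delta_m) > N$) combine monotonicity with continuity: $\|\xb^\dagger - \zb^{n(\delta_m)}_{\delta_m}\| \leq \|\xb^\dagger - \zb^N_{\delta_m}\| \to \|\xb^\dagger - \zb^N_0\| < \varepsilon$. In both cases $\zb^{n(\delta_m)}_{\delta_m} \to \xb^\dagger$, and since $\|\zb - \S_{\mu(\delta_m)}(\zb)\| \leq \sqrt{s}\,\mu(\delta_m) \to 0$, also $\xb^{n(\delta_m)}_{\delta_m} = \S_{\mu(\delta_m)}(\zb^{n(\delta_m)}_{\delta_m}) \to \xb^\dagger$.

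The step I expect to be the main obstacle is the noise-free stage: upgrading the monotonicity and the summability $\sum_n \|\rb^n\|^2 < \infty$ into genuine convergence of $\zb^n$ (not merely boundedness) and correctly identifying the limit as $\xb^\dagger$. This is precisely where the range identity $\Range(WC^T) = \ker(K)^\perp$, derived from the spectral equivalence \eqref{eq:Alg4_rev_0}, is indispensable, and where the reduction of the idealized iteration to the soft-threshold-free scheme of \cite{DH13} must be justified carefully via $\mu(0) = 0$. By contrast, once Proposition~\ref{prop:Alg4_rev_2} is read as Fej\'er monotonicity with respect to $\xb^\dagger$, the noisy transfer and the subsequence argument are comparatively routine.
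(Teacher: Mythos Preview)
Your proposal is correct and follows essentially the same two-stage template as the paper: reduce the noise-free iteration (via $\mu(0)=0$) to the scheme of \cite{DH13}, then transfer to the noisy discrepancy-stopped iterates using the monotonicity of $\|\tilde{\eb}^n_\delta\|$ from Proposition~\ref{prop:Alg4_rev_2} together with the continuity of $\gb \mapsto \zb^n$ from Remark~\ref{remark:Alg4_rev_2}.

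The differences are minor and in your favor. For the noise-free stage the paper simply refers to \cite[Theorem~4]{DH13}, whereas you give a self-contained cluster-point argument, making explicit the range identity $\Range{WC^T}=\Range{K^T}=\ker(K)^\perp$ (a consequence of \eqref{eq:Alg4_rev_0}) that pins down the limit as $\xb^\dagger$. For the transfer step the paper asserts $n_\delta\to\infty$ as $\delta\to 0$ and treats only that case; your subsequence argument with the bounded/unbounded case split is more careful and covers the possibility that the noise-free iteration could reach an exact solution in finitely many steps. Both routes use the same ingredients (Propositions~\ref{prop:Alg4_rev_1}--\ref{prop:Alg4_rev_2}, Corollary~\ref{cor:Alg4_rev_1}, Remark~\ref{remark:Alg4_rev_2}) and arrive at the same conclusion.
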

\begin{proof}
We are going to show convergence for the sequence $\{ \zb^{n(\delta_m)} _{\delta_m}  \}_{m \in \mathbb{N}}$ and then the thesis will follow easily from the continuity of $S_{\mu(\delta)}$ and Remark \ref{remark:Alg4_rev_2}, i.e.,
$$
\lim_{m\to \infty} \xb^{n(\delta_m)} _{\delta_m} = \lim_{m\to \infty}S_{\mu(\delta_m)} (\zb^{n(\delta_m)} _{\delta_m}) =  S_{\lim_{m\to \infty}\mu(\delta_m)} (\lim_{m\to \infty}\zb^{n(\delta_m)} _{\delta_m}) = \lim_{m\to \infty}\zb^{n(\delta_m)} _{\delta_m}.
$$
The proof of the convergence for the sequence $\{ \zb^{n(\delta_m)} _{\delta_m}  \}$ can be divided into two steps: at step one, we show the convergence in the free noise case $\delta =0$. In particular, the sequence $\{ \zb^n \}$ converges to a solution of \eqref{eq:O} that is the closest to $\zb ^0$. At the second step, we show that given a sequence of positive real numbers $\delta_m \to 0$ as $m \to \infty$, then we get a corresponding sequence $\{\zb ^{n(\delta_m)} _{\delta_m} \}$ converging as $m \to \infty$.

Concerning the first step of the proof, we will not give details since it can be just copied from \cite{DH13}[Theorem 4]. Indeed, if $\delta =0$, from Remark \ref{remark:Alg4_rev_2} it follows that $\rb^n _\delta = \rb^n$, and the sequence $\{ \zb ^n \}$ coincides with the one generated by algorithm 1 in \cite{DH13}. We just say that the main ingredients are the convergence of the sequence $\|\eb ^n \|$ granted by Proposition~\ref{prop:Alg4_rev_2} and the convergence to $0$ of the sequence $\| \rb ^n \| \| (CC^T + \alpha_n I)^{-1} \rb^n \|$, since general term of a converging series from Corollary \ref{cor:Alg4_rev_1}. Moreover, in the free noise case the sequence $\{\zb ^n  \}$ will not stop, i.e., $n \to \infty$, since the discrepancy principle will not be satisfied by any $n$, in particular $n_\delta \to \infty$ for $\delta \to 0$. 

Hence, let $\xb$ be the converging point of the sequence $\{ \zb ^n \}$ and let $\delta_m >0$ be a sequence of positive real numbers converging to $0$. For every $\delta_m$, let $n = n(\delta_m)$ be the first positive integer such that \eqref{eq:Alg4_rev_12} is satisfied, whose existence is granted by Corollary \ref{cor:Alg4_rev_1}, and let $\{ \zb^{n(\delta_m)} _{\delta_m} \}$ be the corresponding sequence. For every fixed $\epsilon >0$, there exists $\overline{n} = \overline{n}(\epsilon)$ such that 
\begin{equation}\label{eq:Alg4_rev_13}
\| \xb - \zb ^n \| \leq \epsilon /2 \qquad \mbox{for every } n > \overline{n}(\epsilon),
\end{equation} 
and there exists $\overline{\delta}= \overline{\delta}(\epsilon)$ for which
\begin{equation}\label{eq:Alg4_rev_14}
\| \zb ^{\overline{n}} - \zb^{\overline{n}} _\delta \| \leq \epsilon/2 \qquad \mbox{for every } 0 < \delta < \overline{\delta},
\end{equation}
due to the continuity of the operator $\gb \mapsto \zb^n$ for every fixed $n$, see Remark \ref{remark:Alg4_rev_2}. Therefore, let us choose $\overline{m}= \overline{m}(\epsilon)$ large enough such that $\delta_m < \overline{\delta}$ and such that $n(\delta_m) > \overline{n}$ for every $m > \overline{m}$. Such $\overline{m}$ does exists since $\delta_m \to 0$ and $n_\delta \to \infty$ for $\delta \to 0$. Hence, for every $m > \overline{m}$, we have
\begin{align*}
\| \xb - \zb^{n(\delta_m)} _{\delta_m} \| &= \| \tilde{\eb} ^{n(\delta_m)} _{\delta_m} \| \\
	&\leq  \| \tilde{\eb} ^{\overline{n}} _{\delta_m} \| \\
	&= \| \xb - \zb^{\overline{n}} _{\delta_m} \| \\
	&\leq \| \xb - \zb^{\overline{n}} \|  +  \| \zb^{\overline{n}} - \zb^{\overline{n}} _{\delta_m} \| \leq \epsilon,
\end{align*}
where the first inequality comes from Proposition \ref{prop:Alg4_rev_2} and the last one from \eqref{eq:Alg4_rev_13} and \eqref{eq:Alg4_rev_14}.
\end{proof}

\section{Numerical results}\label{sec:numres}
In this section, we will show the numerical results for image deblurring using our proposed Algorithms 1--4.
We compare them with some available deblurring algorithms, which implement a proper treatment of the boundary artifacts.
In particular we consider two of the algorithms proposed in \cite{AF13}, namely FA-MD for the Frame-based analysis model and TV-MD for the Total Variation model,
and the Algorithm~\cite{BCDS14} called here FTVd since, in the case of nonsymmetric PSF, it reduces to an
implementation of the algorithm in \cite{WYYZ08} with the trick described in \cite{R05}.
The codes of the previous algorithms are available at the web-page of the authors and we use the default parameters and stop conditions. The regularization parameter is chosen by hand in order to provide the best restoration (see the following discussion).

Our tests were done by using MATLAB 7.11.0 (R2010b) with  floating-point precision about $2.22 \cdot 10^{-16}$ on a Lenovo laptop with Intel(R) Core(TM) i2 CPU 2.20 GHz and 2 GB memory.
Assuming that the noise level is available or easily estimated, we stop all Algorithms~1--4 using the discrepancy principle
\eqref{eq:discrpr} with $\gamma=10^{-15}$.
Algorithm 4--NS is stopped according to the modified discrepancy principle \eqref{eq:Alg4_rev_12}.
Moreover, for the Algorithm~4--NS we set 
\[q=0.5 \qquad  \text{ and } \qquad \rho=10^{-4}.\] 
Therefore, $q$ and $\rho$ do not need to be estimated.

The accuracy of the solution is measured by the PSNR value, which is defined as
\[  {\rm PSNR} = 20\log_{10}\frac{255 \cdot n}{\|\fb - \tilde \fb\|}, \]
with $\fb$ and $\tilde \fb$ being the original and the restored images in the FOV, respectively.
The initial guess of each algorithm is set to be the zero vector.

To estimate $\mu$ and $\alpha$, since they are mutually dependent and they are related to the preconditioner,
we fix a possible $\mu$ (usually the results are not very sensible varying $\mu$ if $\alpha$ is properly chosen)
and then the optimum $\alpha$, which gives the largest PSNR, is chosen by trial and error.
Possible strategies to estimate $\alpha$ will be investigated in future works.
Only for Algorithm4--NS we pay a slightly more attention in the choice of $\mu$ since this is the only parameter of the method.
Similarly, for all the other methods considered for comparison, the regularization parameter is chosen by trial and error, as the one leading to
the largest PSNR.

We take only the more appropriate BCs for each example. In particular, if the image has a black background, like in astronomical imaging, we consider zero BCs, while when the image is a generic
picture we use antireflective BCs. In the following the ``Algorithm $x$'' is denoted by ``Alg-BC$x$'' and ``Alg-Rect$x$'', when $A$ is obtained imposing BCs or is the rectangular matrix, respectively. We recall
that Algorithm 4 is available only for the BC approach.

\subsection{Linear B-spline framelets}
The tight-frame used in our tests is the piecewise linear B-spline framelets given in \cite{COS}.
Namely, given the masks
\[
b_0 = \frac{1}{4} \,[1, \;2, \;1], \quad
b_1 = \frac{\sqrt{2}}{4} \,[1, \;0, \;-1], \qquad
b_2 = \frac{1}{4} \,[-1, \;2, \;-1],
\]
we define the 1D filters of size $n\times n$ by imposing reflective BCs
\[
    B_0 = \frac{1}{4}\left[
      \begin{array}{ccccc}
        3 & 1 & 0 & \dots  &0 \\
        1 & 2 & 1 & & \\
        & \ddots & \ddots & \ddots &\\
        &  & 1 & 2 & 1\\
        0 & \dots & 0 & 1&3 \\
      \end{array}
    \right],
    \quad
    B_1 = \frac{1}{4}\left[
      \begin{array}{ccccc}
        1 & -1 & 0 & \dots  &0 \\
        -1 & 2 & -1 & & \\
        & \ddots & \ddots & \ddots &\\
        &  & -1 & 2 & -1\\
        0 & \dots & 0 & -1&1 \\
      \end{array}
    \right],
\]
and
\[
    B_2 = \frac{1}{4}\left[
      \begin{array}{ccccc}
        -1 & 1 & 0 & \dots  &0 \\
        -1 & 0 & 1 & & \\
        & \ddots & \ddots & \ddots &\\
        &  & -1 & 0 & 1\\
        0 & \dots & 0 & -1&1 \\
      \end{array}
    \right].
\]
The nine 2D filters are obtained by
\[ B_{i,j} = B_i \otimes B_j, \qquad i,j=0,1,2,\]
where $\otimes$ denotes the tensor product operator.
Finally, the corresponding tight-frame analysis operator is
\[
W = \left[
\begin{array}{c}
  B_{0,0} \\
  B_{0,1} \\
  \vdots \\
  B_{2,2}\\
\end{array}
\right].
\]

Throughout the experiments, the level of the framelet decomposition is 4 like in \cite{COS}
and the level of wavelet decomposition is the one used in FA--MD.

\begin{figure}
\centering
 \subfigure[true image]{
    \includegraphics[width=0.18\textwidth]{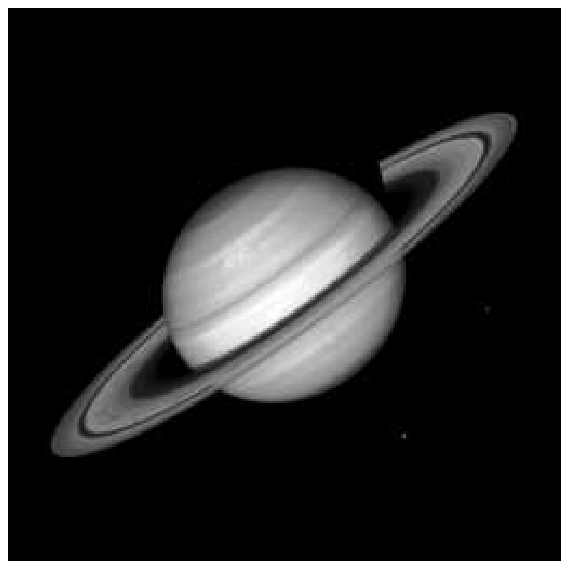}}
 \subfigure[PSF image]{
    \includegraphics[width=0.18\textwidth]{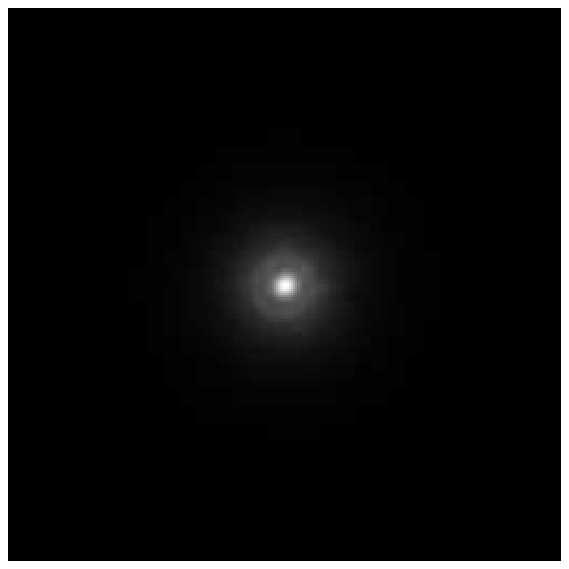}}
 \subfigure[observed image]{
    \includegraphics[width=0.18\textwidth]{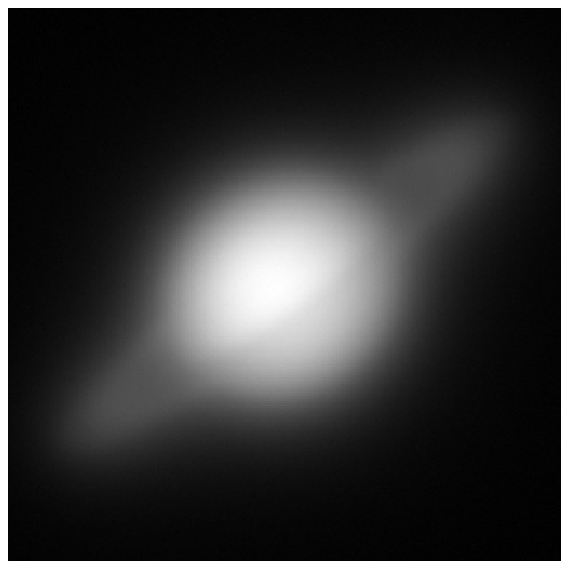}}
 \subfigure[Alg-BC1]{
    \includegraphics[width=0.18\textwidth]{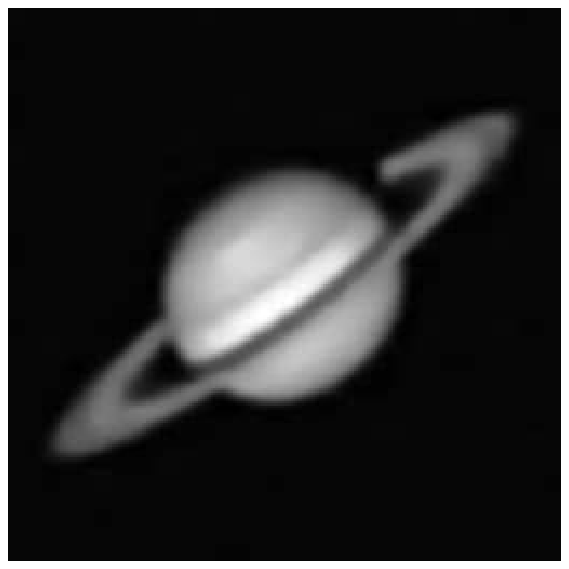}}
 \subfigure[Alg-BC3]{
    \includegraphics[width=0.18\textwidth]{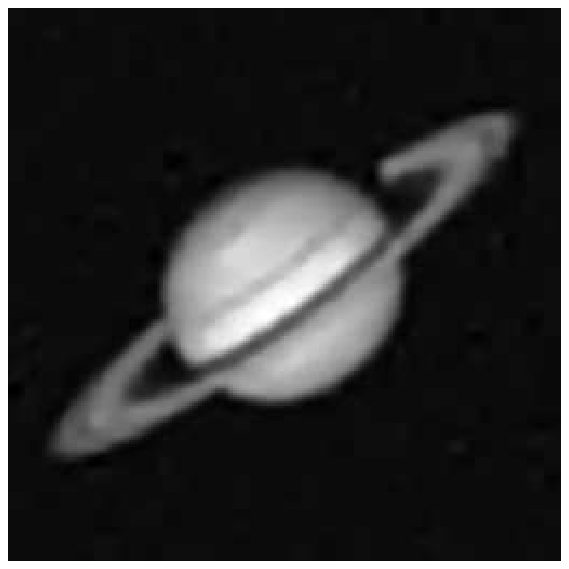}}
 \subfigure[Alg-Rect2]{
    \includegraphics[width=0.18\textwidth]{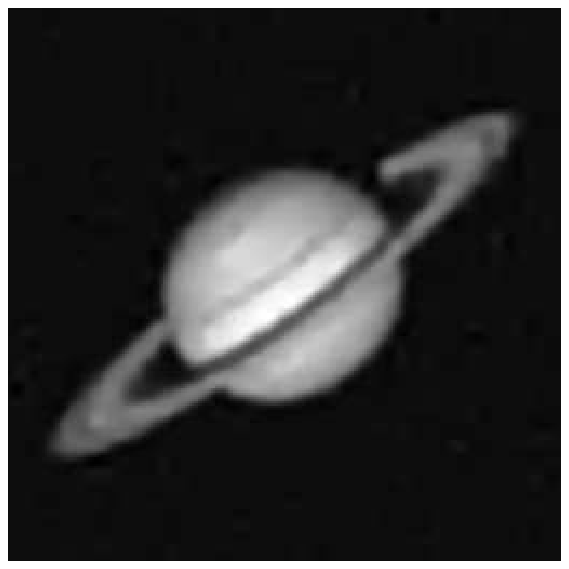}}
 \subfigure[Alg-Rect3]{
    \includegraphics[width=0.18\textwidth]{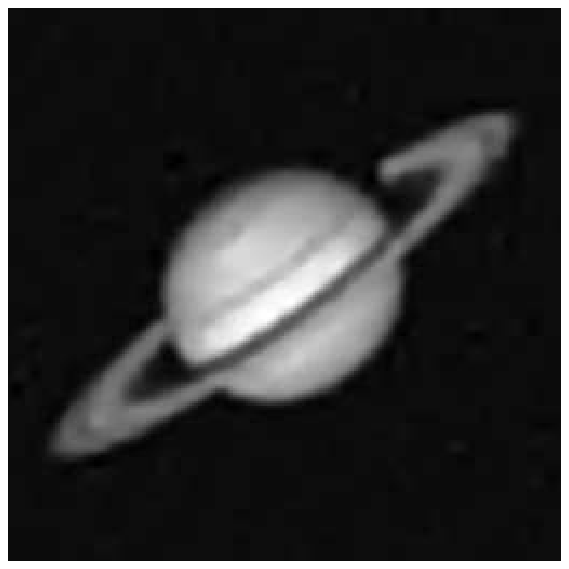}}
 \subfigure[Alg-BC4--NS]{
    \includegraphics[width=0.18\textwidth]{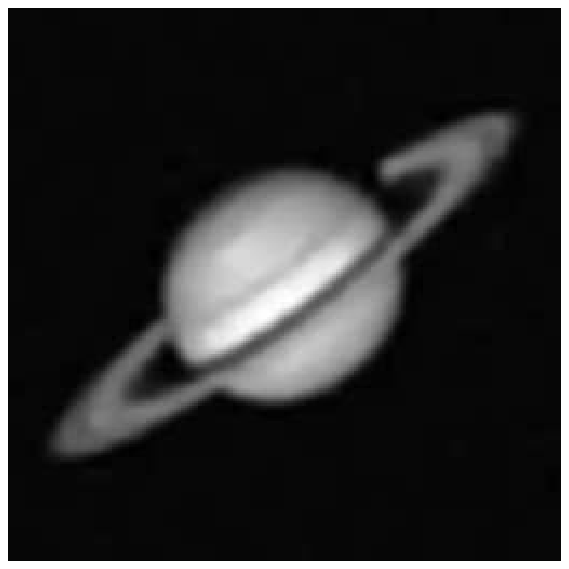}
}
 \subfigure[TV-MD]{
    \includegraphics[width=0.18\textwidth]{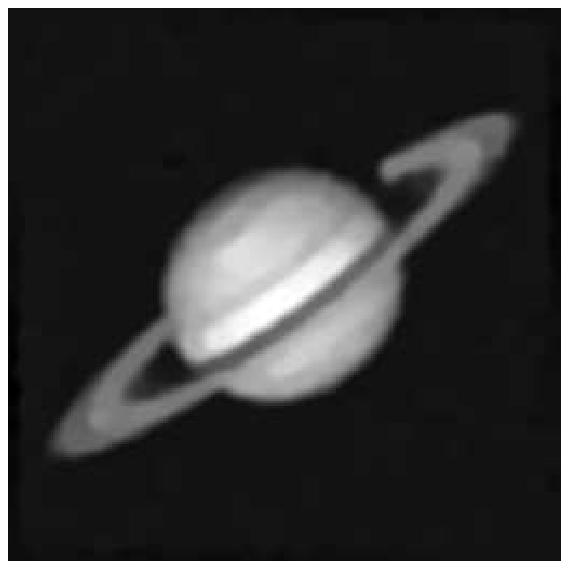}}
 \subfigure[FTVd]{
    \includegraphics[width=0.18\textwidth]{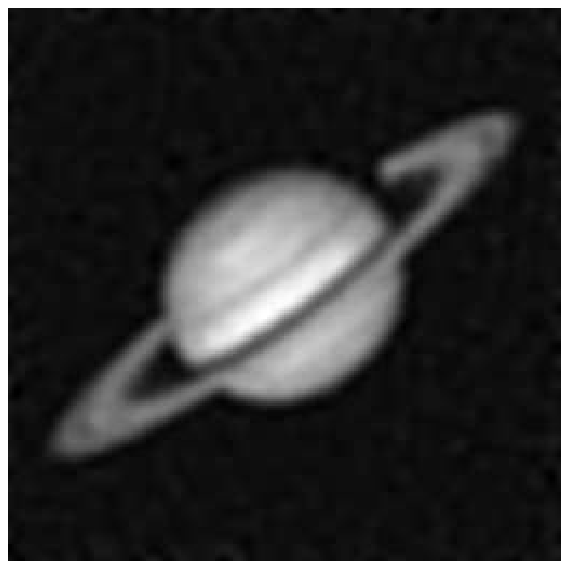}}
\caption{Example 1: true image, PSF, observed image, and restored images.}
\label{fig:Test1}
\end{figure}

\subsection{Example 1: Saturn image}
The first example is $256 \times 256$ Saturn image in Figure~\ref{fig:Test1}~(a) while
the astronomical PSF is taken from the ``satellite'' test problem in \cite{RestoreTools}  Figure~\ref{fig:Test1}~(b). We add a $1\%$ of Gaussian white noise to obtain the observed image in Figure~\ref{fig:Test1}~(c).
We assume zero BCs.

Note that Alg-BC3 and Alg-Rect3 use the DCT for the preconditioner, while Alg-BC4 like Alg-BC1 and Alg-Rect1 use FFT, since the PSF is not quadrantally symmetric.

Table \ref{tab:Test1} reports the PSNR and the CPU time for the different algorithms. Note that Algorithm~1 provides a poor and time consuming restoration. Moreover, it requires a larger value of the parameter $\alpha$ with respect to the algorithms~2--4, which is necessary to satisfy condition \eqref{eq:condconvalg1} and assure the convergence. 
The algortihms 2 and 3 have the largest PSNR with reasonable CPU time, in particular Alg-Rect3 seems to be a good choice and Alg-BC3 gives a comparable restoration in about half time.
Algorithm 4 gives a slightly lower PSNR even if the computed restorations are better than Algorithm 1 and the other algorithms from the literature, keeping also a low CPU time. 


The algorithms in \cite{AF13} (FA-MD and TV-MD) in this example lead to a larger CPU time, while the FTVd is very fast but the computed restoration is the worst.
Figure \ref{fig:Test1} shows the corresponding restored images.
To test the quality of the restorations, Figure~\ref{fig:Res1} shows the residual images defined as $\gb-A \tilde \fb$, where $ \tilde \fb$ is the restored image.

\begin{table}
\begin{center}\begin{small}
\begin{tabular}
{p{60pt}|crr|p{90pt}} \hline
Algorithm  & PSNR & Iter. & CPU time(s) & Regular. parameter  \\
\hline
\cline{1-5}
{Alg-BC1}  & 30.97  & 322    & 200.99  & $\alpha = 0.045$      \\
{Alg-BC2}   & 31.60  & 9    & 18.18 & $\alpha = 0.0004$ \\
{Alg-BC3}  & 31.56  & 10    & 7.07 & $\alpha = 0.0005$  \\
\cline{1-5}
{Alg-Rect1}  & 30.95 & 493 & 948.94 & $\alpha = 0.07 $ \\
{Alg-Rect2}   & 31.62 &  8  & 22.20 & $\alpha = 0.0003$\\
{Alg-Rect3}    & \textbf{31.61} & 7  & \textbf{13.50} & $\alpha = 0.0003$\\
\cline{1-5}
{Alg-BC4}    & 31.49  & 29    & 16.56 & $\alpha = 0.0018$  \\
{Alg-BC4--NS}   & 31.25 & 15 & 10.32 & $\mu = 6$   \\
\cline{1-5}
{FA-MD} & 30.87     &  & 90.85  & $\lambda =  0.001$  \\
{TV-MD} & 31.17     &  & 47.61  & $\lambda = 0.01$  \\
{FTVd} & 30.50      &  & 1.75 & $1/\alpha = 0.0013$\\
\end{tabular}
\end{small}
\caption{Example 1: PSNR, number of iterations, and CPU time in seconds for the best regularization parameter (maximum PSNR) reported in the last column. For our algorithms $\mu=10$ except for Alg-BC4--NS.}
\label{tab:Test1}
\end{center} \end{table}

\begin{figure}
\centering
 \subfigure[Alg-BC1]{
    \includegraphics[width=0.18\textwidth]{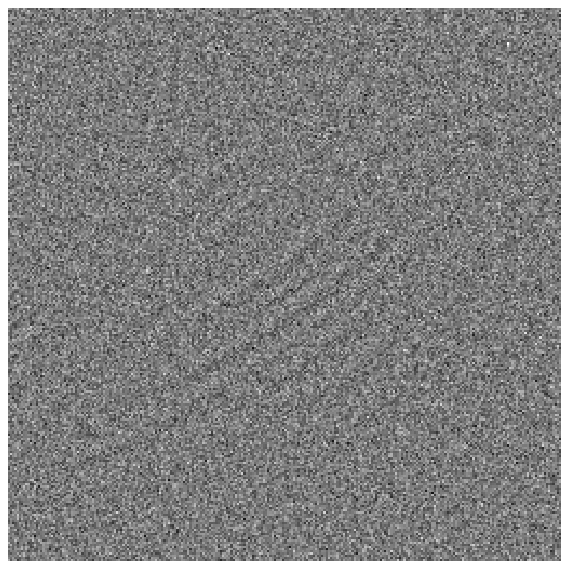}}
 \subfigure[Alg-Rect3]{
    \includegraphics[width=0.18\textwidth]{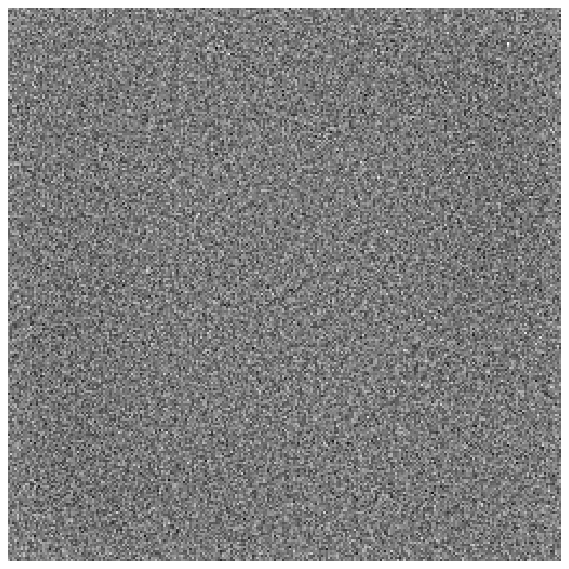}}
 \subfigure[Alg-BC4]{
    \includegraphics[width=0.18\textwidth]{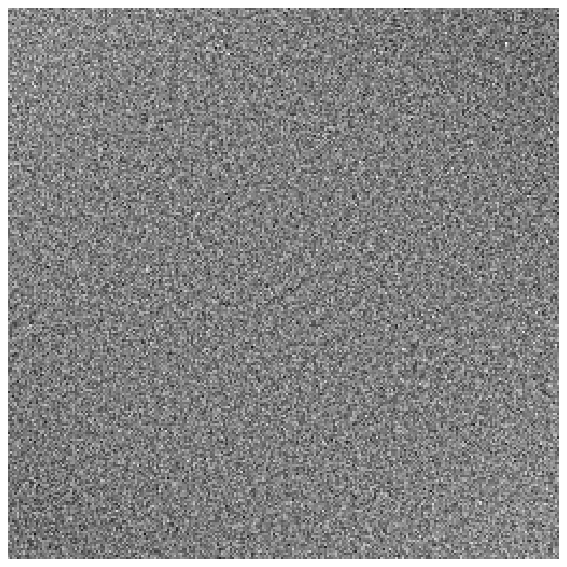}}
 \subfigure[FA-MD]{
    \includegraphics[width=0.18\textwidth]{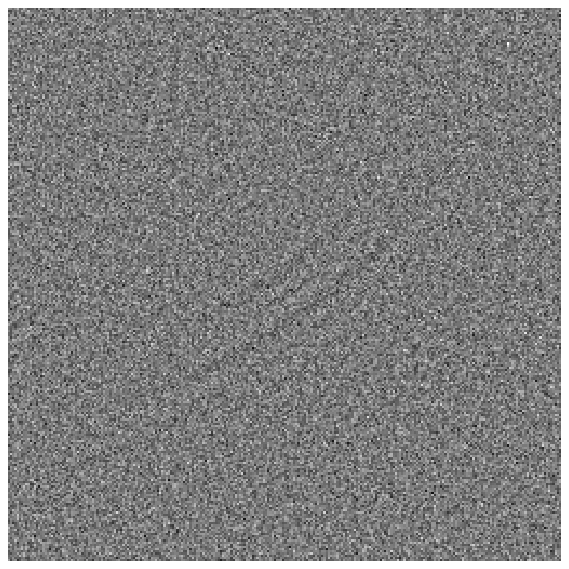}}
 \subfigure[TV-MD]{
    \includegraphics[width=0.18\textwidth]{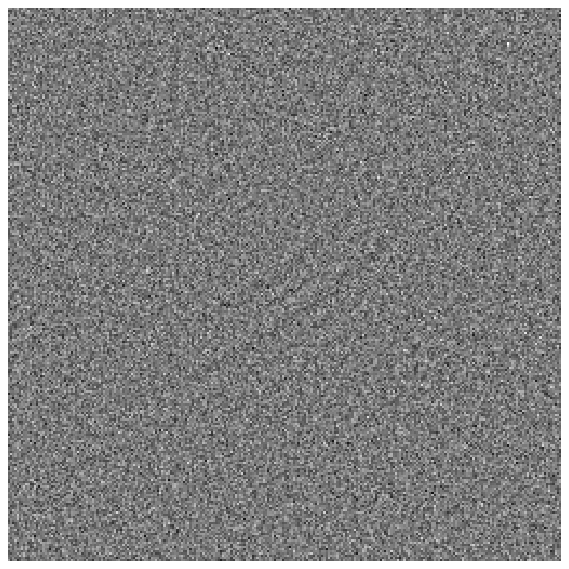}}
\caption{Example 1: residual image $\gb-A \tilde \fb$, where $ \tilde \fb$ is computed by different algorithms.}
\label{fig:Res1}
\end{figure}

\begin{figure}
\centering
 \subfigure[true image]{
    \includegraphics[width=0.175\textwidth]{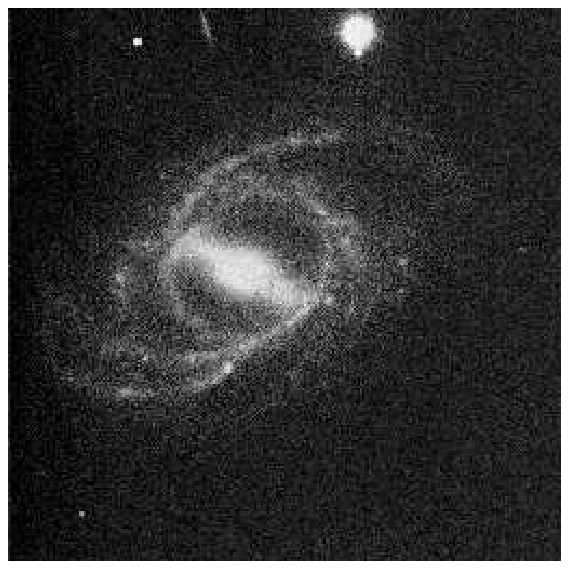}}
 \subfigure[PSF image]{
    \includegraphics[width=0.175\textwidth]{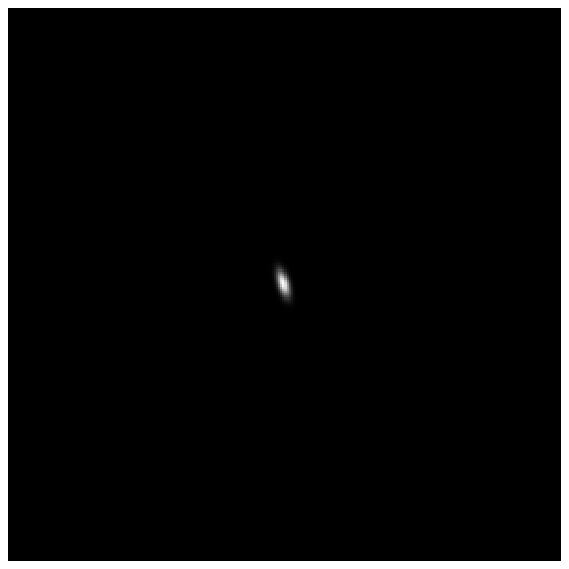}}
 \subfigure[observed image]{
   \includegraphics[width=0.175\textwidth]{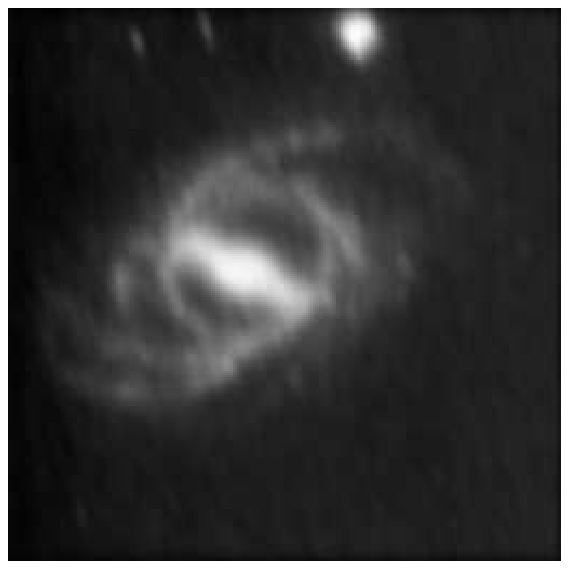}}
 \subfigure[Alg-BC1]{
    \includegraphics[width=0.175\textwidth]{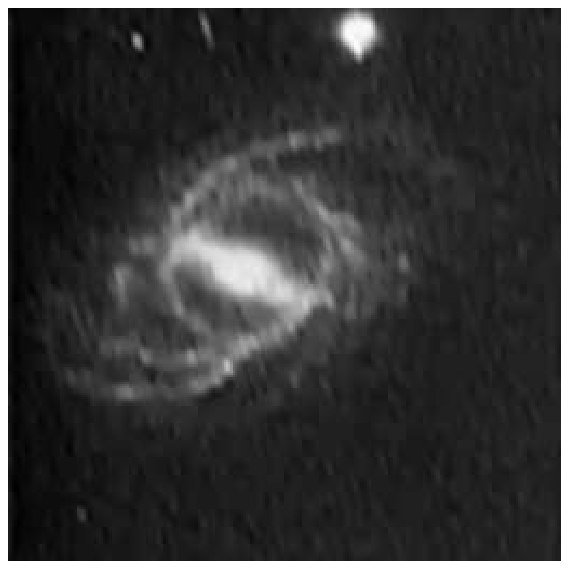}}
 \subfigure[Alg-BC3]{
   \includegraphics[width=0.175\textwidth]{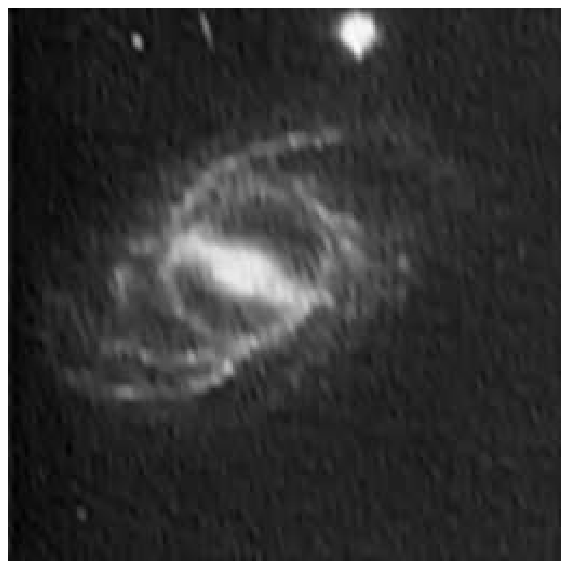}}
 \subfigure[Alg-Rect3]{
   \includegraphics[width=0.175\textwidth]{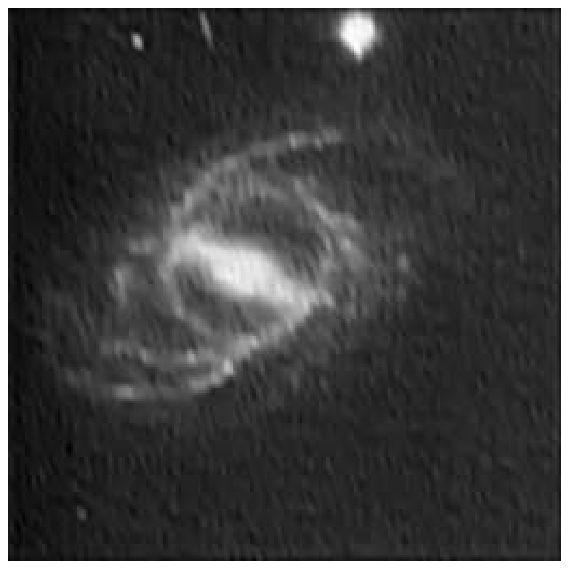}}
\subfigure[Alg-BC4]{
   \includegraphics[width=0.175\textwidth]{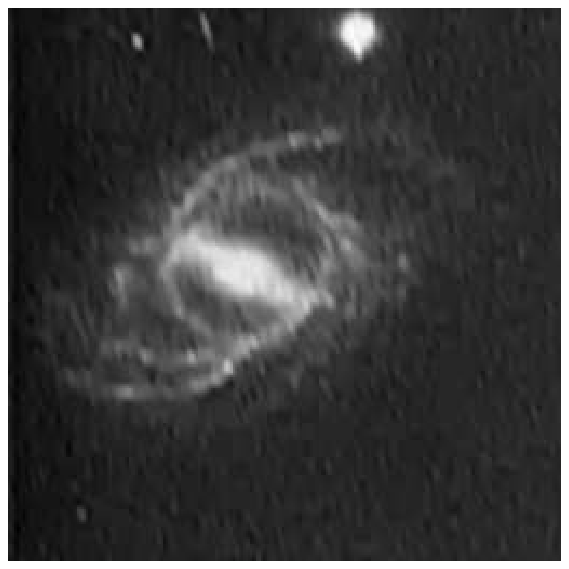}}
 \subfigure[Alg-BC4--NS]{
   \includegraphics[width=0.175\textwidth]{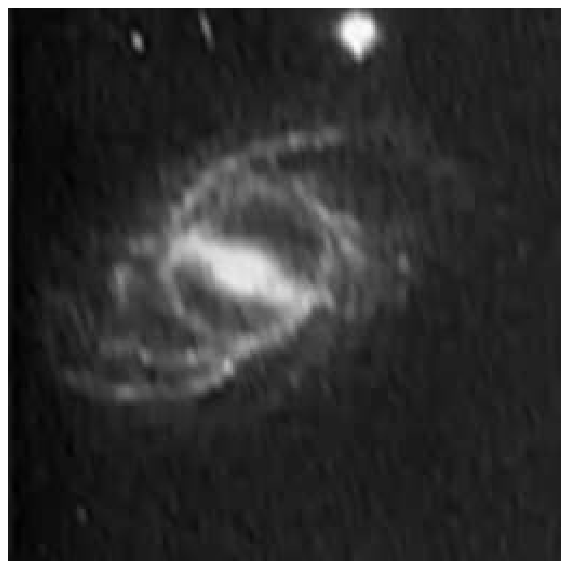}}
 \subfigure[FA-MD]{
    \includegraphics[width=0.175\textwidth]{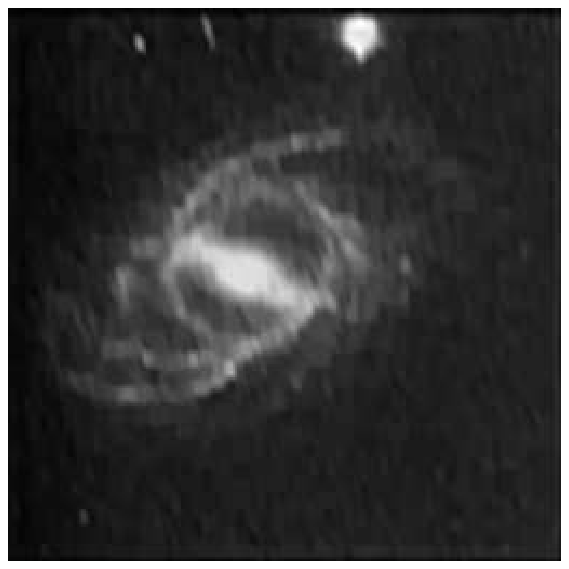}}
 \subfigure[FTVd]{
   \includegraphics[width=0.175\textwidth]{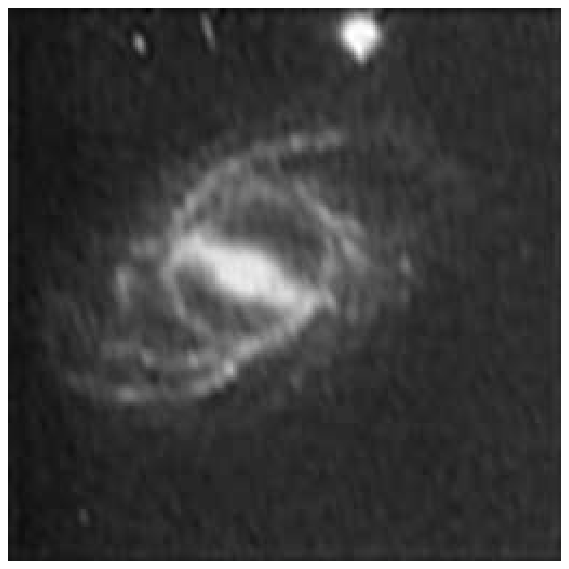}}
\caption{Example 2: true image, PSF, observed image, and restored images.}
\label{fig:Test2}
\end{figure}

\subsection{Example 2: Galaxy image }
We consider another astronomical example with the $256\times256$ image in Figure~\ref{fig:Test2}~(a)
corrupted by oblique Gaussian blur taken from the ``GaussianBlur422'' test problem in \cite{RestoreTools}, see Figure~\ref{fig:Test2}~(b). We add a $2\%$ of Gaussian white noise to obtain the observed image in Figure~\ref{fig:Test2}~(c).
We impose zero BCs and the computational properties of the different algorithms are the same as in Example~1.

Table \ref{tab:Test2} shows that Alg-BC4 is the best algorithm, since it obtains about the same PSNR of Algorithm~2, but with about 1/4 of the CPU time. The variant Alg-BC4--NS with a nonstationary choice of the preconditioner results to be very effective and comparable with the other algorithms based on the BC model avoiding the choice of parameter $\alpha$. 
Differently, the rectangular approach gives a slightly lower PSNR with a larger CPU time.
Concerning the other methods, the same observations reported for Example 1 still apply.
Figure \ref{fig:Test2} shows some of the corresponding restored images.

\begin{table}
\begin{center}\begin{small}
\begin{tabular}
{p{60pt}|p{25pt}p{20pt}p{60pt}|p{90pt}} \hline
Algorithm  & PSNR & Iter. & CPU time(s) & Regular. parameter  \\
\hline
\cline{1-5}
{Alg-BC1}  & 25.02  & 21   & 16.57  & $\alpha = 0.04$      \\
{Alg-BC2}   & 25.07  &  12  & 22.88 & $\alpha = 0.008$ \\
{Alg-BC3}  & 25.05  & 27    & 17.58 & $\alpha = 0.02$  \\
\cline{1-5}
{Alg-Rect1}  & 24.91 & 25   & 42.14 & $\alpha = 0.06 $ \\
{Alg-Rect2}   & 24.98 & 12   &  28.21& $\alpha = 0.007$\\
{Alg-Rect3}    & 24.96 & 21   & 36.18& $\alpha = 0.01$\\
\cline{1-5}
{Alg-BC4}    & \textbf{25.06}  & 12    & \textbf{6.21} & $\alpha = 0.008$  \\
{Alg-BC4--NS}   & 25.01  & 29   & 17.10  & $\mu = 4$   \\
\cline{1-5}
{FA-MD} & 24.50     &  &  77.05 & $\lambda = 0.02$  \\
{TV-MD} & 24.55     &  &   68.91 & $\lambda = 0.09$  \\
{FTVd} &  24.62  &  & 1.51 & $1/\alpha = 0.027$\\
\end{tabular}
\end{small}
\caption{Example 2: PSNR and CPU time for the best regularization parameter (maximum PSNR).
For our algorithms $\mu= 10$ except for Alg-BC4--NS.}\label{tab:Test2}
\end{center} \end{table}

\begin{figure}
\centering
 \subfigure[true image]{
    \includegraphics[width=0.18\textwidth]{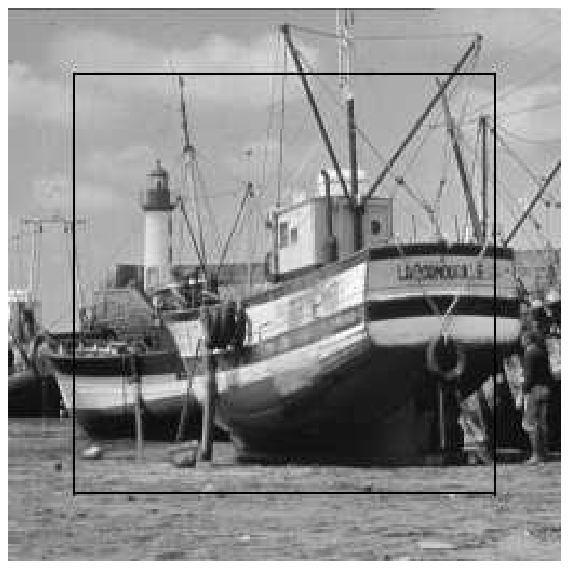}}
 \subfigure[PSF image]{
    \includegraphics[width=0.18\textwidth]{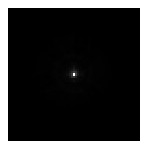}}
 \subfigure[observed image]{
    \includegraphics[width=0.18\textwidth]{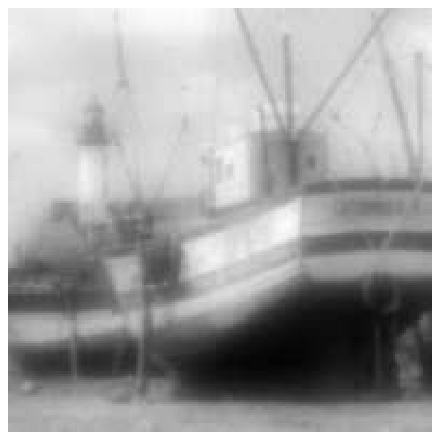}}
 \subfigure[Alg-BC1]{
    \includegraphics[width=0.18\textwidth]{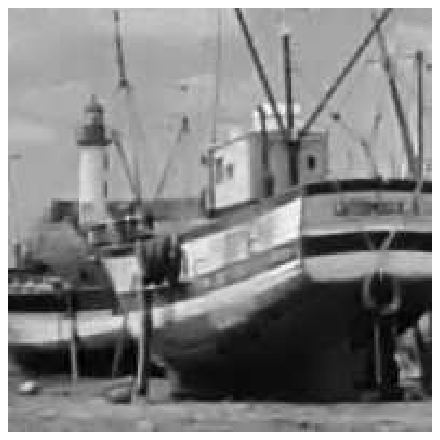}}
 \subfigure[Alg-BC2]{
    \includegraphics[width=0.18\textwidth]{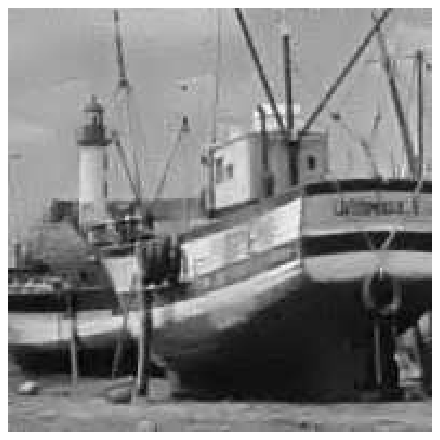}}
 \subfigure[Alg-Rect3]{
    \includegraphics[width=0.18\textwidth]{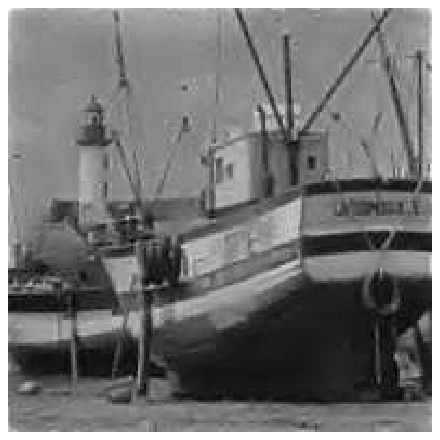}}
 \subfigure[Alg-BC4]{
    \includegraphics[width=0.18\textwidth]{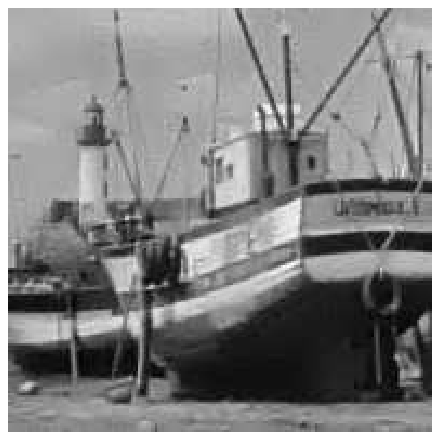}}
 \subfigure[Alg-BC4--NS]{
    \includegraphics[width=0.18\textwidth]{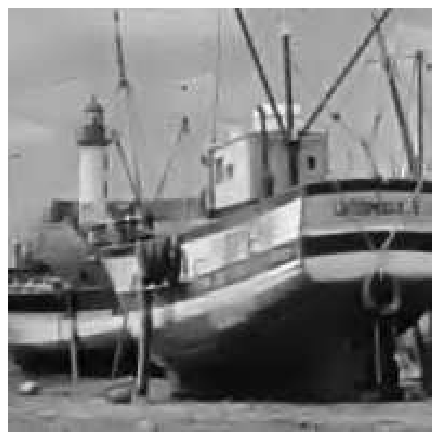}}
 \subfigure[FA-MD]{
    \includegraphics[width=0.18\textwidth]{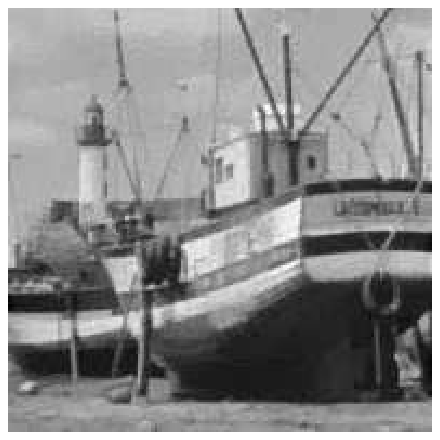}}
 \subfigure[TV-MD]{
    \includegraphics[width=0.18\textwidth]{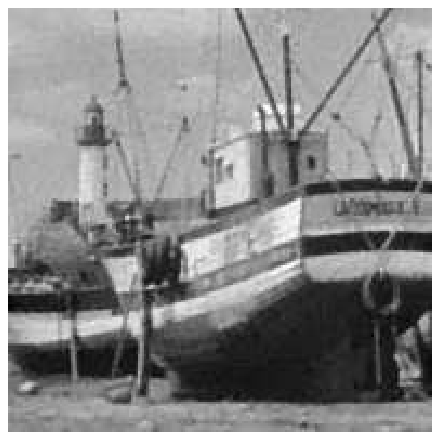}}
\caption{Example 3:  true image, PSF, observed image, and restored images.}
\label{fig:Test3}
\end{figure}

\subsection{Example 3: Boat image}

To set up a scenario of unknown boundaries, the observed image of size $196\times196$ is  obtained
convolving the full ($256\times256$) image using arbitrary BCs (periodic, for computational convenience) and then keeping only the pixels in the FOV $196\times196$ (i.e., those not depending on the BCs).
The FOV is denoted by a black box in the true image in Figure~\ref{fig:Test3} (a).
$1\%$ of white Gaussian noise is added to the $196\times196$ blurred image.

We impose antireflective BCs owing to the generic structure of that picture. 
Hence the preconditioner in Alg-BC3 is diagonalized by the antireflective transform, according to the structure of the matrix $A$. Alg-Rect3 uses the DCT as usual, while Alg-BC4 like Alg-BC1 and Alg-Rect1 use FFT since the PSF is not quadrantally symmetric.

Table~\ref{tab:Test3} shows the PSNR and the CPU time for the best restorations shown in Figure~\ref{fig:Test3}. Note that our algorithms with the rectangular matrix are less effective than the antireflective BC approach, leading to a lower PSNR.
To obtain reasonable restorations in the rectangular case we need a large $\mu$ and so we take a different $\mu$ for the two deblurring models (BC and rectangular matrix).
The best algorithm results to be Alg-BC4, since it combines a good restoration with a low CPU time. 

\begin{table}
\begin{center}\begin{small}
\begin{tabular}
{p{60pt}|p{25pt}p{20pt}p{60pt}|p{100pt}} \hline
Algorithm  & PSNR & Iter. & CPU time(s) & Regular. parameter  \\
\hline
\cline{1-5}
{Alg-BC1}  & 29.43  & 97    & 34.26  & $\mu=20, \quad \alpha=0.37$ \\
{Alg-BC2}   & 30.11  & 11    & 17.21 & $\mu=20, \quad \alpha = 0.025$\\
{Alg-BC3}     & 30.09  & 10   & 4.03 & $\mu=20, \quad \alpha=0.022$  \\
\cline{1-5}
{Alg-Rect1}    & 27.19  & 74  & 32.63 & $\mu=200, \quad \alpha=0.03$ \\
{Alg-Rect2} & 27.10  & 74   & 45.95  & $\mu=200, \quad \alpha=0.03$ \\
{Alg-Rect3}     & 27.22  & 74   &33.14& $\mu=200, \quad \alpha=0.03$ \\
\cline{1-5}
{Alg-BC4}  & \textbf{30.17}  & 13    & \textbf{3.67} & $\mu=20, \quad \alpha=0.03$  \\
{Alg-BC4--NS} & 29.77  & 60    & 19.57  & $\mu=30$ \\
\cline{1-5}
{FA-MD} & 29.61     & & 15.95 & $\lambda = 0.04$   \\
{TV-MD} & 29.87      && 16.74  & $\lambda = 0.1$ \\
{FTVd} & 28.95     && 0.73 & $1/\alpha = 0.0069$
\end{tabular}
\end{small}
\caption{Example 3:  PSNR and CPU time for the best regularization parameter (maximum PSNR).}
\label{tab:Test3}
\end{center} \end{table}

\begin{figure}
\centering
 \subfigure[true image]{
    \includegraphics[width=0.18\textwidth]{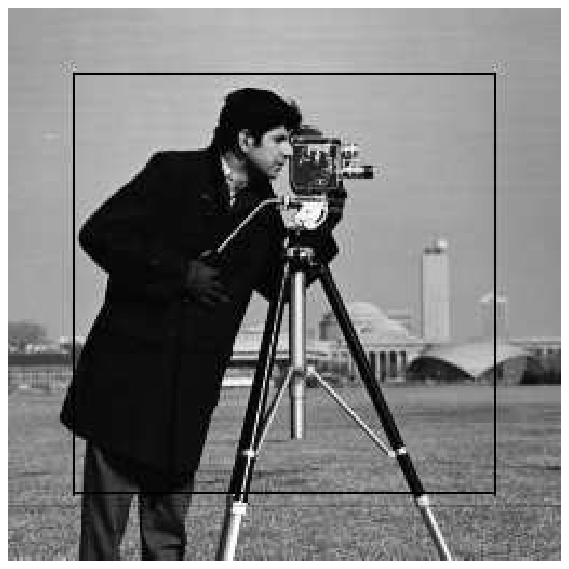}}
 \subfigure[PSF ($31 \times 31$)]{
    \includegraphics[width=0.18\textwidth]{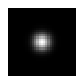}}
 \subfigure[observed image]{
    \includegraphics[width=0.18\textwidth]{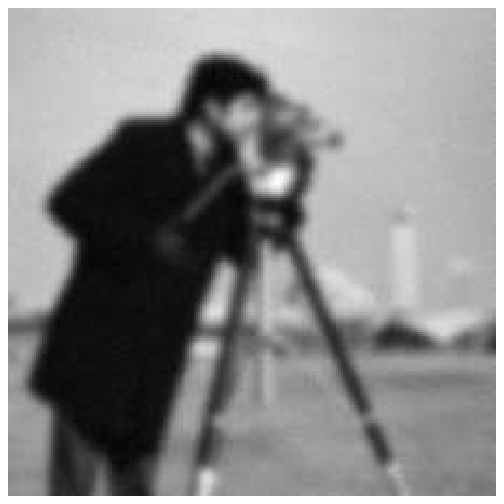}}
 \subfigure[Alg-BC1]{
    \includegraphics[width=0.18\textwidth]{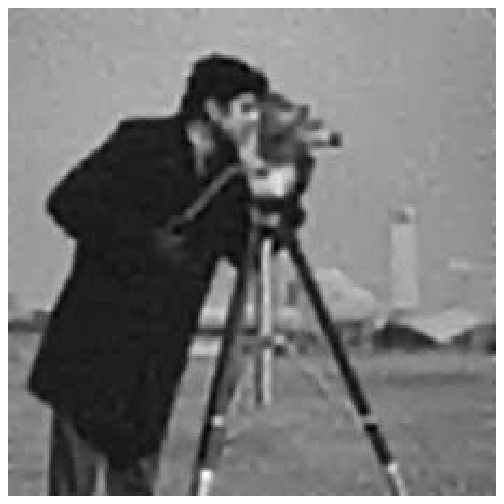}}
 \subfigure[Alg-BC3]{
    \includegraphics[width=0.18\textwidth]{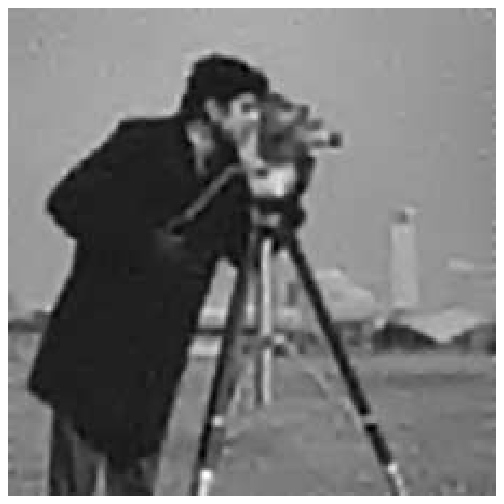}}
 \subfigure[Alg-Rect1]{
    \includegraphics[width=0.18\textwidth]{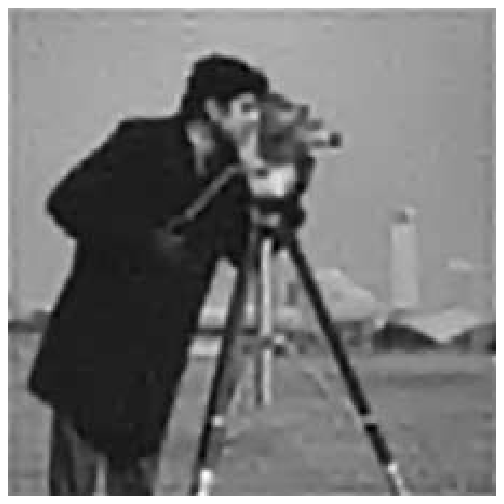}}
 \subfigure[Alg-BC4]{
    \includegraphics[width=0.18\textwidth]{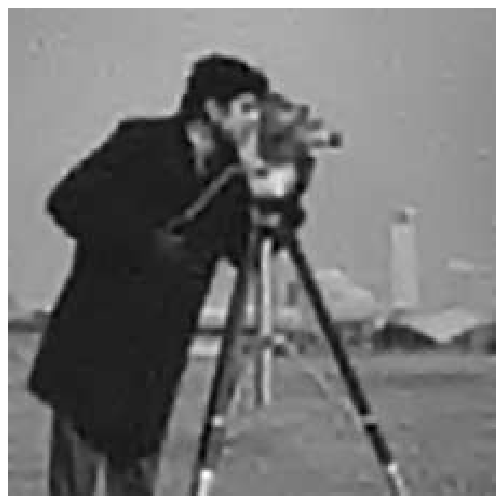}}
 \subfigure[Alg-BC4--NS]{
    \includegraphics[width=0.18\textwidth]{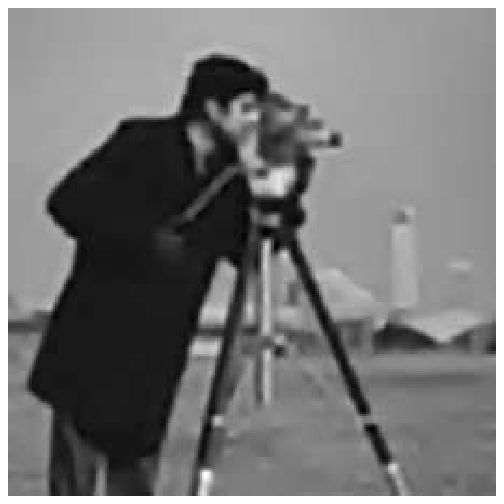}}
 \subfigure[TV-MD]{
    \includegraphics[width=0.18\textwidth]{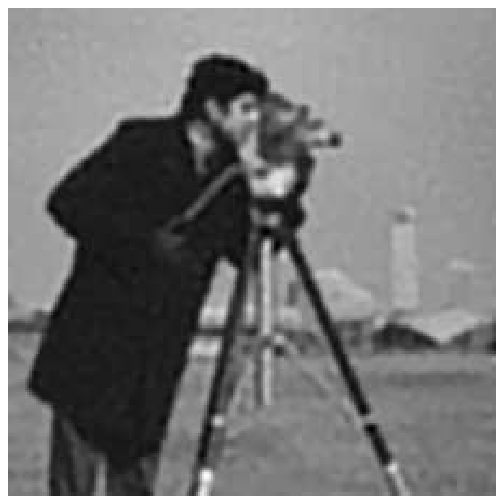}}
 \subfigure[FTVd]{
    \includegraphics[width=0.18\textwidth]{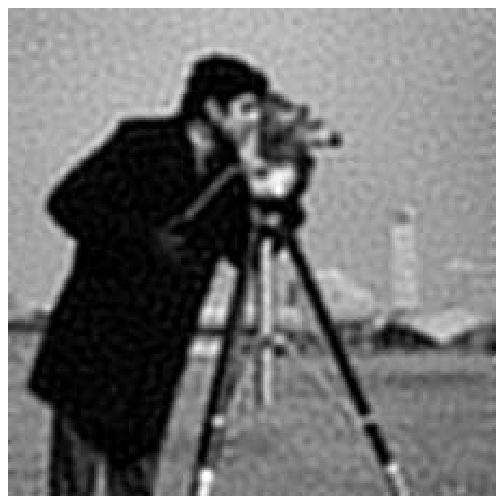}}
\caption{Example 4: true image, PSF, observed image, and restored images.}
\label{fig:Test4}
\end{figure}

\subsection{Example 4: Cameraman with Gaussian blur}
In this example we consider the classical Cameraman image
$256\times256$ distorted by a $31\times31$ Gaussian blur
with standard deviation 2.5 and a $2\%$ of white Gaussian noise (see Figure~\ref{fig:Test4}).
The size of the observed image is $226 \times 226$ according to the support of the PSF.

The PSF is quadrantally symmetric and hence, following our discussion in Section~\ref{sect:BCCBprec}, Algorithm 1 is implemented using the DCT instead of FFT. Clearly $\widetilde H = H$ and so  Alg-Rect3 reduces to Alg-Rect1 (they are really the same algorithm).
We impose antireflective BCs and hence Alg-BC3 is the standard MLBA \eqref{eq:MLBA2} with $P=(AA^T+\alpha I)^{-1}$, but recalling that we are using the reblurring approach where $A^T$ is replaced by $A'$. On the other hand Alg-BC2 is no longer useful since the matrix-vector product with the matrix $P=(AA^T+\alpha I)^{-1}$ can be computed by two antireflective transforms without requiring the PCG: in fact, the use of preconditioning would represent an unnecessary increase of the CPU time without increasing the PSNR with respect to Alg-BC3.
Finally, Alg-BC4 is implemented by DCT like Alg-BC1.

Table~\ref{tab:Test4} shows that all the compared methods in this example provide comparable results. Nevertheless, it is interesting to observe that Alg-BC4 gives a slightly better restoration with a lower CPU time than the standard MLBA, i.e., Alg-BC3. TV-MD computes again a comparable restoration, but with more than a double CPU time.
Figure~\ref{fig:Test4} shows the restored images.

\begin{table}
\begin{center}\begin{small}
\begin{tabular}
{p{60pt}|p{25pt}p{20pt}p{60pt}|p{90pt}} \hline
Algorithm  & PSNR & Iter. & CPU time(s) & Regular. parameter  \\
\hline
\cline{1-5}
{Alg-BC1}  & 23.67  & 51    &  20.19 & $\alpha = 0.05$      \\
{Alg-BC3}   & 23.74  & 17   & 6.03 & $\alpha = 0.01$ \\
\cline{1-5}
{Alg-Rect1}  & 23.59 & 24  & 12.44 & $\alpha = 0.02 $ \\
{Alg-Rect2}   & 23.64 & 17   & 16.49 & $\alpha = 0.009$\\
\cline{1-5}
{Alg-BC4}    & \textbf{23.76}  & 17    &  \textbf{5.60} & $\alpha = 0.01$  \\
{Alg-BC4--NS}   & 23.53  & 39    & 14.78 & $\mu = 40$   \\
\cline{1-5}
{FA-MD} & 23.44     &  & 14.10  & $\lambda = 0.01$ \\
{TV-MD} & 23.55    &  & 13.37  & $\lambda = 0.11$  \\
{FTVd} &  23.10     &  & 0.89 & $1/\alpha = 0.0088$\\
\end{tabular}
\end{small}
\caption{Example 4:  PSNR and CPU time for the best regularization parameter (maximum PSNR). For our algorithms $\mu=40$.}
\label{tab:Test4}
\end{center} \end{table}

To test the ability of the different algorithms in dealing with the boundary effects, Figure~\ref{fig:Res4}
shows the North-East corner of the residual images.
We can see that Alg-BC1 and FTVd have some ringing effects at the boundary, while Algorithm~4 and TV-MD do not show any particular distortion at the boundary.

\begin{figure}
\centering
 \subfigure[Alg-BC1]{
    \includegraphics[width=0.18\textwidth,clip=true,trim=3cm 3cm 0cm 0cm]{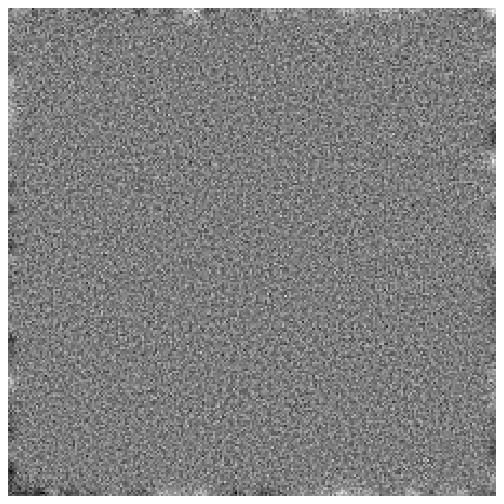}}
 \subfigure[Alg-BC4]{
    \includegraphics[width=0.18\textwidth,clip=true,trim=3cm 3cm 0cm 0cm]{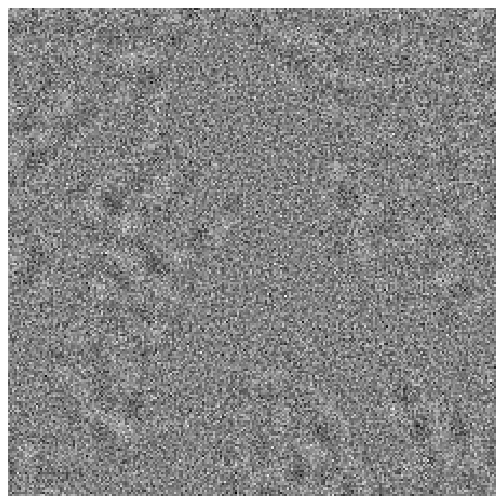}}
 \subfigure[Alg-BC4--NS]{
    \includegraphics[width=0.18\textwidth,clip=true,trim=3cm 3cm 0cm 0cm]{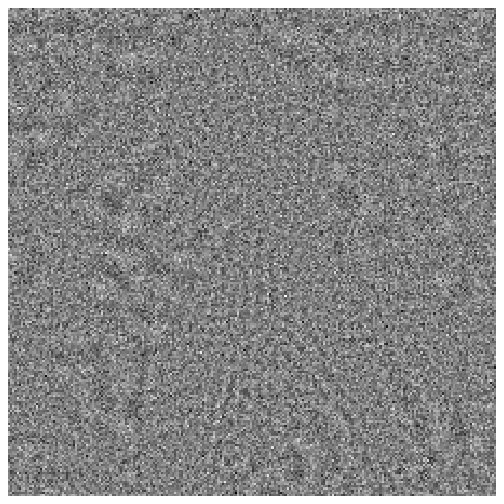}}
 \subfigure[TV-MD]{
    \includegraphics[width=0.18\textwidth,clip=true,trim=3cm 3cm 0cm 0cm]{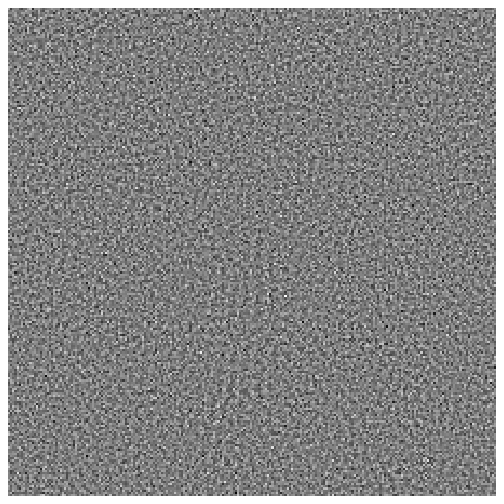}}
 \subfigure[FTVd]{
    \includegraphics[width=0.18\textwidth,clip=true,trim=3cm 3cm 0cm 0cm]{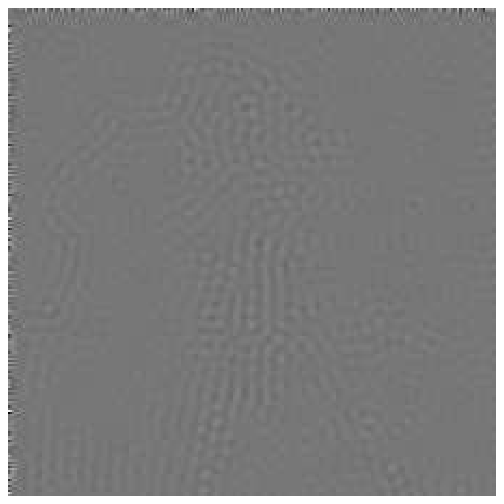}}
\caption{Example 4: Nort-East corner of the residual images.}
\label{fig:Res4}
\end{figure}

\section{Conclusion}\label{sect:concl}
In this paper, we have investigated several regularization preconditioning strategies for the MLBA applied to the synthesis approach with accurate restoration models for image
deblurring and unknown boundaries. Our numerical results shows that Alg-BC4, which combines the favorite BCs (depending on the problem) with an approximated Tikhonov regularization preconditioner, represents a robust and effective algorithm. Indeed, it provides accurate restorations in all our examples with a reduced CPU time also in comparison to the state of the art algorithms \cite{AF13,BCDS14} and the standard MLBA when available, cf. Example 4.

We have investigated only the synthesis approach, but the same preconditioning strategies can be applied to the analysis and the balanced approach \cite{COS2,STY} as well.
Moreover, possible future investigations could consider the use of a preconditioner obtained by a small rank approximation of the PSF as in \cite{KN}, 
strategies to estimate the parameter $\alpha$, and nonstationary sequences to approximate the best $\alpha$ avoiding its estimation like in \cite{HDC14}.

\subsection*{Acknowledgement}
We would like to thank the Referees and the Editor for their valuable comments and suggestions,
which helped us to improve the readability and the content of the paper. We also thank Stefano Serra-Capizzano for the useful discussions.

The work of the first and fourth author is supported in part by 973 Program (2013CB329404), NSFC (61370147,61170311), Sichuan Province Sci. \& Tech. Research Project (2012GZX0080).
The work of the second and third author is partly  supported by PRIN 2012 N. 2012MTE38N.


\end{document}